\documentclass[a4paper, 11pt]{article}
\usepackage{courier}
\usepackage{blindtext}
\usepackage{amsmath}
\usepackage{amssymb}
\usepackage[colorlinks=true,breaklinks]{hyperref}
\usepackage[hyphenbreaks]{breakurl}
\usepackage{xcolor}
\definecolor{c1}{rgb}{0,0,1}
\definecolor{c2}{rgb}{0,0.3,0.9}
\definecolor{c3}{rgb}{0.3,0.9}
\hypersetup{linkcolor={c1},citecolor={c2},urlcolor={c3}}
\usepackage{enumerate}
\usepackage{todonotes}
\usepackage{makeidx}

\makeindex
\usepackage[top=3cm,
bottom=3.5cm,
left=2.8cm,right=2.8cm]{geometry}
\usepackage{fancyhdr}

\def\XXint#1#2#3{{\setbox0=\hbox{$#1{#2#3}{\int}$ }
\vcenter{\hbox{$#2#3$ }}\kern-.6\wd0}}

\usepackage{amsthm}
\theoremstyle{plain}
\newtheorem{theorem}{Theorem}[section]

\theoremstyle{definition}

\theoremstyle{lemma}

\theoremstyle{Remark}
\newtheorem{Remark}[theorem]{Remark}

\theoremstyle{proposition}
\newtheorem{proposition}[theorem]{Proposition}

\theoremstyle{corollary}
\newtheorem{corollary}[theorem]{Corollary}

\theoremstyle{example}

\theoremstyle{assumption}

\usepackage{mathrsfs}
\usepackage{systeme}
\usepackage{colonequals}

\begin{document}
\pagestyle{empty}
\title{Sharp estimates for the Fourier transform of surface-carried measures and maximal operators associated with  hypersurfaces in $\mathbb{R}^4$ with vanishing Gaussian curvature.}


\author{Isroil A.\,Ikromov\thanks{V.\,I.\,Romanovsky Institute  of Mathematics,  Uzbekistan Academy of Sciences,
University Boulevard 15, 140104, Samarkand,  Uzbekistan,
{\tt ikromov1@rambler.ru}},    \,\,\,Gayrat Toshpulatov\thanks{Institut f\"ur Analysis und Numerik, Fachbereich Mathematik und Informatik der Universit\"at M\"unster, Orl\'eans-Ring 10, 48149 M\"unster, Germany, {\tt  gayrat.toshpulatov@uni-muenster.de}}}

\maketitle

\pagestyle{plain}

\begin{abstract}
In this paper, we study problems related to harmonic analysis on hypersurfaces in $\mathbb{R}^4 $ with zero Gaussian curvature and given as  graphs of polynomial functions.  We derive  sharp uniform estimates with respect to the direction of frequencies for the Fourier transform  of measures supported on such hypersurfaces.  Additionally, we study the $L^p$-boundedness problem of maximal operators associated with hypersurfaces. We determine the exact value of  the  boundedness exponent  in terms of the heights of these hypersurfaces.

\end{abstract}
\textbf{Keywords:} Fourier transform
of surface carried measure, maximal operator, hypersurface, oscillatory integral,  Newton polyhedron, adapted coordinate system. 
\\
\textbf{2020
Mathematics Subject Classification:}  42B25, 42B10. 
\tableofcontents
\section{Introduction}
Let $S$ be a hypersurface in $\mathbb{R}^{n+1}$ and let $\rho d\sigma$ be a surface-carried measure on $S,$ where $d\sigma$ denotes the surface measure on $S$ and $\rho\geq 0$ is a smooth density with a compact support. The Fourier transform of $\rho d\sigma$ is defined by
 \begin{equation*}
 \widehat{\rho d\sigma}(\xi)\colonequals \int_S e^{i\xi\cdot x}\rho(x) d\sigma(x),\,\,\,\,\xi\in \mathbb{R}^{n+1}.
\end{equation*}
 One of the fundamental problems  in harmonic analysis is to determine the exact asymptotic behavior of $\widehat{\rho d\sigma}(\xi)$ as  $|\xi|$ tends to infinity.  The applications range from number theory to  partial differential equations, see \cite{Stein, S.Sh}.

  Using a partition of unity argument, we can assume that $\rho$ is concentrated in a sufficiently small neighborhood of a fixed point $x^0\in S.$
It is known  that the  asymptotic behavior of $\widehat{\rho d\sigma}(\xi)$ is
invariant under translations and rotations of the ambient space. Hence, we may replace
the surface $S$ by any suitable image under a Euclidean motion of $\mathbb{R}^{n+1}.$ We may thus assume  $x^0=(0,...,0)\in \mathbb{R}^{n+1}$ and that $S$ is the graph
\begin{equation*}
S=\{(x_1,...,x_n, \phi(x_1,...,x_n))\in \mathbb{R}^{n+1}:\,x=(x_1,...,x_n)\in U\}
\end{equation*}
of a smooth function $\phi$ defined in  a sufficiently small neighborhood $U\subset \mathbb{R}^n$ of the origin such that
\begin{equation*}
\phi(0)=0,\,\,\,\,\nabla \phi(0)=0.
\end{equation*}
  This let us write  $\widehat{\rho d\sigma}(\xi)$ as an oscillatory integral
   \begin{equation}\label{Oscil}
 \widehat{\rho d\sigma}(\xi)= \int_{\mathbb{R}^n} e^{i(\xi_{n+1}\phi(x)+\xi_1x_1+...+\xi_nx_n)}\eta(x) dx,
\end{equation}
where
 $
 \eta(x)\colonequals \rho(x_1,...,x_n, \phi (x))\sqrt{1+|\nabla \phi(x)|^2} \in C^{\infty}_0(U).
 $
  If $\xi_1=...=\xi_n=0$ and $ \xi_{n+1}=:\lambda,$ then  \eqref{Oscil}   can be written in the form
\begin{equation}\label{Osc}
\int_{\mathbb{R}^n}e^{i\lambda \phi(x)}\eta (x)dx.
\end{equation}
 In one dimension $n=1$, the asymptotic behavior of the integral \eqref{Osc} as the parameter $\lambda$ tends to infinity is determined by the maximum order of vanishing of the function $\phi$ at its critical points, see \cite{Stein}.
In higher dimensions, this problem  is substantially difficult. This is connected with the multiplicity and complexity of the critical points of $\phi$ that occur when the dimension exceeds one. It is well-known that  the asymptotic behavior of the oscillatory integral \eqref{Oscil} does not change under a smooth change of variables.   Arnold conjectured in \cite{Ar} that  the asymptotic behavior of the oscillatory integral \eqref{Oscil} is completely determined by the Newton polyhedron of  $\phi$ in a so-called \emph{adapted} coordinate system (see the next section for the definition of the Newton polyhedron and others). More precisely, with $\phi$
we can associate  the so-called \emph{height} $h(\phi)$
defined in terms of the Newton polyhedron of $\phi$  when represented in  a smooth coordinate
system near the origin. An important property of this height $h(\phi)$ is that
it is invariant under local smooth changes of coordinates fixing the origin.
 Arnold expected that the leading asymptotic behavior or the main part of the asymptotic development would be determined by the height  $h(\phi)$ of $\phi$.
    Later, when $\phi$ is real-analytic, Varchenko \cite{Var} proved  that Arnold's conjecture holds true for \eqref{Osc} in two dimensions $n=2.$  In particular, Varchenko gave sufficient conditions for the adaptedness of a given coordinate system, proved the existence of an adapted coordinate system for analytic functions $\phi$ without multiple components, and showed that the leading term of the asymptotic expansion of \eqref{Osc} can determined by the height $h(\phi)$ and  the dimension of the principal face in an adapted coordinate system.     Karpushkin \cite{Kar} proved that
Varchenko's estimate  is stable under sufficiently small analytic perturbations of $\phi.$ 
Varchenko's result on the existence of an adapted coordinate system has been extended by Ikromov and M\"uller \cite{Adap}  to arbitrary real-analytic functions and arbitrary smooth functions of finite type in two dimensions, see also \cite{Greenblatt}.   Their approach is inspired by the work of Phong, Stein,  and Strum \cite{Ph.SS} on the Newton polyhedron and oscillatory integrals; see \cite{Ph.S} for applications in the context of oscillatory integral operators.
Ikromov and   M\"uller  \cite{Ik.Mu} also proved Arnold's conjecture for smooth functions of finite type in two dimensions. In this study, the asymptotic decay rates are sharp.

However, in three dimensions and higher, Arnold's conjecture 
does not hold in general. The reason is that  Varchenko \cite{Var} constructed an example in three dimensions, for which  the leading asymptotics differ from the predicted. Arnold's conjecture is currently known to hold in any dimension for smooth convex functions  of finite line type, see  \cite{Sch, I.U}.
For analytic functions, the asymptotic behavior of oscillatory integrals can be studied  based on Hironaka's theorem on the resolution of singularities \cite{Hironaka, Atiyah}, for recent results on this topic, see \cite{Pramanik}.

In three dimensions and higher, the simple description of the asymptotic behavior of integral \eqref{Oscil} remains unknown for general hypersurfaces. In this paper, we aim to shed light on the three-dimensional case $n=3$.  We investigate the asymptotic behavior of the Fourier transform $\widehat{\rho d\sigma}$ for hypersurfaces in $\mathbb{R}^4.$  
 Specifically, we focus on hypersurfaces $S\subset \mathbb{R}^4$  with zero Gaussian curvature and given as  graphs of polynomial functions. This means that we consider  hypersurfaces   of the form
 \begin{equation*}
S=\{(x_1,x_2,x_3, \phi(x_1,x_2,x_3))\in \mathbb{R}^{4}:\,x=(x_1,x_2,x_3)\in U\},
\end{equation*}
 where $U\subset \mathbb{R}^3$ is a neighborhood of the origin. We assume that $\phi$ is a polynomial function satisfying
 \begin{equation}\label{phi=0}
 \phi(0)=0,\,\,\,\nabla \phi (0)=0,
 \end{equation} and
 \begin{equation}\label{det=0}
 \det(D^2\phi(x))=0,\,\,\,\,\forall\,x\in \mathbb{R}^3.
 \end{equation}
 The condition \eqref{det=0} means that the Gaussian curvature of $S$ is identically zero. Our results concerning the oscillatory integral  \eqref{Oscil} are the following:
 \begin{itemize}
 \item In Theorem \ref{th:adap}, we show that there is an adapted coordinate system for any polynomial function $\phi$ satisfying \eqref{phi=0} and \eqref{det=0}.
 \item In Theorem \ref{th:osc},  we prove Arnold's conjecture. More precisely, we show that the leading term of the asymptotic expansion of \eqref{Oscil} is determined by the height $h(\phi)$ of $\phi$ and the dimension of the principal face of the Newton polyhedron  in this adapted coordinate system.       Our asymptotic decay rates are sharp.
 \item In Corollary \ref{cor:Karp}, we show that our asymptotic estimate on \eqref{Oscil}  is stable under sufficiently small analytic perturbation of $\phi.$ This result is analogous to Karpushkin's result \cite{Kar} in two dimensions.
\end{itemize}

 The problem of finding the exact decay rate of the Fourier transform of surface-carried measures is closely related to the $L^p$-boundedness problem of maximal operators.   As before, let $d\sigma$ denote the surface measure on a hypersurface $S$ in $\mathbb{R}^{n+1}$ and let $ \rho \in C_0^{\infty}(S)$ be non-negative.  We define the averaging operator
 \begin{equation*}
 \mathcal{A}_tf(y)\colonequals \int_Sf(y-tx)\rho(x)d\sigma(x),\,\,\,\,t>0,\,\,\,\,f\in C^{\infty}_0(\mathbb{R}^{n+1}) .
 \end{equation*}
 The  maximal operator associated with the hypersurface $S$ is given by
 \begin{equation*}
 \mathcal{M}f(y)\colonequals \sup_{t>0}|\mathcal{A}_tf(y)|, \,\,\,\,f\in C^{\infty}_0(\mathbb{R}^{n+1}) .
 \end{equation*}
 We say that the maximal operator $\mathcal{M}$ is bounded on $L^p(\mathbb{R}^{n+1})$ if there exists a positive constant $C_p$ such that
 \begin{equation*}
 ||\mathcal{M}f||_{L^p(\mathbb{R}^{n+1})}\leq C_p||f||_{L^p(\mathbb{R}^{n+1})},\,\,\,\,\forall\,f\in C^{\infty}_0(\mathbb{R}^{n+1}).
 \end{equation*}
 The fundamental question is for which $p\geq 1$ the maximal operator $\mathcal{M}$ is bounded on $L^p(\mathbb{R}^{n+1})$.
 Since $\rho$ has a compact support,  $\mathcal{M}$ is always bounded on $L^{\infty}(\mathbb{R}^{n+1})$. Hence, the question is to determine 
 \begin{equation*}
 p(S)\colonequals \inf\{p\geq 1:
 \mathcal{M}\,\,\text{is bounded on}\,\,L^p(\mathbb{R}^{n+1}) \text{ for all } 0\leq \rho\in C_0^\infty(S)\}.
 \end{equation*}
The number $p(S)$ is called  the \emph{boundedness exponent} of the maximal operator $\mathcal{M}$ associated with the hypersurface $S$.

    Note that, unlike linear transformations, translations do not commute with
dilations, which is why Euclidean motions are not admissible coordinate changes for the
study of the maximal operator $\mathcal{M}$. Hence, as it is done in \cite{I.K.M,P1,P2},   we require the  transversality assumption on $S,$ i.e.,
for every $x\in S,$ the affine tangent plane $x+T_xS $ to $S$ through $x$ does not pass the origin in $\mathbb{R}^{n+1}.$ 
This  transversality assumption is natural in this context. Indeed, if this condition is not met, then the behavior of the maximal operator may change significantly.
For studies without the transversality condition, we refer to \cite{Zim} for hypersurfaces in $\mathbb{R}^3$ and \cite{Lee.Oh} for the multidimensional case.

  Using a partition of unity argument, we can assume that $\rho$ is concentrated in a sufficiently small neighborhood of a fixed point $x^0\in S.$  
   The transversality assumption allows
us to find a linear change of coordinates in $\mathbb{R}^{n+1}$ so that in the new coordinates
$S$ can locally be represented as the graph of a smooth function. The norm of
$\mathcal{M}$ when acting on $L^p$ is invariant under such a linear change of coordinates.
More precisely, after applying a suitable linear change of coordinates to $\mathbb{R}^{n+1},$ we may
assume that $x^0 = (0,...,0,1)\in \mathbb{R}^{n+1},$  and   $S$ is given as the
graph
\begin{equation}\label{Smax}
S=\{(x, 1+\phi(x))\in \mathbb{R}^{n+1}:\,x\in U\},
\end{equation}
 where $U\subset \mathbb{R}^{n}$ is a small neighborhood  of the origin and  $\phi$ is a smooth function   satisfying $\phi(0)=0,$ $\nabla \phi (0)=0.$
Then the maximal  operator  can be written as
 \begin{equation*}
 \mathcal{M}f(y)= \sup_{t>0}\left| \int_{\mathbb{R}^n}f(y_1-tx_1,..., y_n-tx_n, y_{n+1}-t(1+\phi(x)))\eta(x)dx\right|, \,\,\,\,f\in C^{\infty}_0(\mathbb{R}^{n+1}),
 \end{equation*}
 where $\eta(x)\colonequals \rho(x_1,...,x_n,1+\phi(x))\sqrt{1+|\nabla \phi(x)|^2}\in C^{\infty}_0(U).$

 The first fundamental result on the $L^p$-boundedness of the maximal operator was due to Stein \cite{Ball}, who proved that if $S$ is the unit sphere centered at the origin in $\mathbb{R}^{n+1}$ with $n \geq 2$, then the corresponding spherical maximal operator is bounded on $L^p$ for every $p > 1 + {1}/{n}$.
 A two-dimensional version of this result was proven by Bourgain \cite{Bo}.
 The non-vanishing Gaussian curvature of the unit sphere plays a crucial role in these articles.  A more general result \cite{Gr} is  that, if $S$   has non-zero $k\geq 2$ principal curvatures at each point of $\mathrm{supp}(\rho)$, then the corresponding maximal operator is bounded on $L^p$ for every $p>1+1/k.$ Later, a similar result was obtained for the more subtle  case $k=1$ by Sogge in \cite{Sogge}.  Sogge and Stein \cite{Sog.St} showed that, if the Gaussian curvature of $S$ does not vanish to infinite order  at    any point of $S,$ then $\mathcal{M}$ is bounded on $L^p$ for a sufficiently large  $p$. Moreover, if $S$ is a smooth finite type hypersurface, then the boundedness exponent $p(S)$ is finite, see \cite{I.U}.  However, the exact value of  $p(S)$ is not determined in these articles, except for the case of $n=1$.  Iosevich \cite{Io} studied the maximal operator associated with finite type curves in $\mathbb{R}^2$ and determined the exact value of $p(S)$.
 Iosevich and Sawyer \cite{Io.Saw} showed that, if the maximal operator $\mathcal{M}$ is bounded on $L^p, $ then
 $
  d(x,H)^{-1/p}\in L^{1}_{\mathrm{loc}}(S),
    $
  where $H$ is any hyperplane not passing through the origin and $d(x,H)$ denotes the distance from $x\in S$ to $H$. It was conjectured in \cite{Io.Saw}  that  for $p>2$  the above condition is necessary and sufficient for the boundedness of the maximal operator $\mathcal{M}$ on $L^p.$
 An essentially complete answer to the  $L^p$-boundedness of $\mathcal{M}$ has been given by  Ikromov, M\"uller et al. \cite{I.K.M, P1,P2} for  hypersurfaces $S$ of finite type in $\mathbb{R}^3.$  In these studies, the boundedness exponent $p(S)$ is determined by the  height $h(\phi)$  of the function $\phi$ when $S$ has the form in  \eqref{Smax}.
In higher dimensions, there are  studies for convex hypersurfaces: Using Schulz's decomposition  of convex functions in \cite{Sch},  Iosevich and Sawyer \cite{I.S} proved sharp $L^p$-estimates for  smooth convex hypersurfaces $S$ of finite line type for $p>2.$  This result has been extended in \cite{I.U}  for analytic convex hypersurfaces $S$ of finite type, which are not necessarily  finite line type. However, in dimensions $n\geq 3,$ the $L^p$-boundedness
 of maximal operators associated with non-convex hypersurfaces $S$ whose
Gaussian curvature vanishes at some points is still  open. In this paper, we aim to contribute the  three-dimensional case $n=3$.

As we mentioned before, asymptotic behavior of the Fourier transform of  surface-carried measures is intimately connected with the   $L^p$-boundedness  of  maximal  operators:  Greenleaf \cite{Gr}  proved that, if
\begin{equation}\label{O}|\widehat{\rho d\sigma}(\xi)|\le C(1+|\xi|)^{-q}
\end{equation}
for some $C>0$ and  $q>1/2,$ then the maximal operator is bounded on $L^p$ whenever $p>1+1/(2q).$ However, this result is not optimal in general.  Stein, for $q=1/2,$ and later Iosevich  and  Sawyer \cite{I.S}, for $0<q<1/2,$ conjectured that if \eqref{O} holds for some $0<q\leq 1/2,$ then  $\mathcal{M}$ is bounded for $p>1/q.$ This conjecture was proven for curves in $\mathbb{R}^2$ in \cite{Io}, for hypersurfaces of fine type in $\mathbb{R}^3$ in \cite{I.K.M},  and for convex hypersurfaces in any dimension in \cite{I.U}.  This conjecture with $q=1/2$ has been recently proven by Oh \cite{Oh} for all smooth hypersurfaces.

 In this paper, we also study the $L^p$-boundedness of the maximal operators associated with hypersurfaces $S$ in $\mathbb{R}^4$ whose Gaussian curvature is identically zero and  given as graphs of  polynomial functions.
This means that we consider hypersurfaces $S\subset \mathbb{R}^4$  given as the
graph
\begin{equation*}
S=\{(x_1,x_2,x_3, 1+\phi(x_1,x_2,x_3))\in \mathbb{R}^{4}:\,(x_1,x_2,x_3)\in U\},
\end{equation*}
 where $U\subset \mathbb{R}^3$ is a neighborhood of the origin and $\phi$ is a  polynomial function satisfying \eqref{phi=0} and \eqref{det=0}.
 Our main results concerning the maximal operator $\mathcal{M}$ are the following:
 \begin{itemize}
 \item In Theorem \ref{th:Max}, we prove that the maximal operator $\mathcal{M}$ is bounded on $L^p(\mathbb{R}^4)$ for $p>\max\{h(\phi), 2\},$ where $h(\phi)$ is the height of $\phi.$
 \item In Proposition \ref{prop:max}, based on \cite{Io.Saw} we prove a necessary condition: if the maximal operator $\mathcal{M}$ with  $\rho(x^0)>0$ is bounded on $L^p(\mathbb{R}^4)$ for $p>1$, then we necessarily have $p>h(\phi)$. In the case $h(\phi)\geq 2,$  Theorem \ref{th:Max} and  Proposition \ref{prop:max} yields (see Corollary \ref{cor:max}) that the maximal operator $\mathcal{M}$ with $\rho(x^0)>0$ is bounded on $L^p(\mathbb{R}^4)$ if and only if $p>h(\phi)$, and so $p(S)=h(\phi).$
 In addition, we obtain confirmation of the Iosevich-Sawyer  and  Stein-Iosevich-Sawyer conjectures.

 \item In Theorem \ref{th:h<2}, we consider the case  $h(\phi)<2$:  by assuming  that all the  principal curvatures of $S$  are zero at $x^0=(0,0,0,1)\in S$ (i.e., $D^2\phi(0)=0$), we  prove that the maximal operator $\mathcal{M}$ with $\rho(x^0)>0$ is bounded on $L^p(\mathbb{R}^4)$ if and only if $p>h(\phi)$, and so $p(S)=h(\phi)$. When two principal curvatures of $S$  are non-zero at $x^0=(0,0,0,1)\in S$,  i.e., $\mathrm{rank}(D^2\phi(0))=2$, using \cite{Gr} we show that  the maximal operator $\mathcal{M}$ is bounded on $L^p(\mathbb{R}^4)$ for $p>3/2.$ The case where only one principal curvature is non-zero at $x^0=(0,0,0,1)\in S$ remains unsolved, and the exact value of the boundedness exponent $p(S)$ is unknown, even for hypersurfaces in $\mathbb{R}^3,$ see \cite{P2} for an appropriate geometric conjecture 
 for this case. 
\end{itemize}

  In analogy with
Arnold's notion of the \emph{singularity index}   \cite{Ar,agvMN82},  one can define the so-called \textit{(uniform) oscillation index} and  \textit{(uniform) contact index} at  any point $x^0$ of  a smooth hypersurface $S$ (see Section 6 for their definitions).  These indexes are related to  the  Fourier transform of surface-carried measures and the maximal operator associated with $S$. In this paper, we also consider these indexes for a hypersurface $S$ in $\mathbb{R}^4$ with zero Gaussian curvature and given as  a graph of  a polynomial function $\phi$.
 Our result concerning these indexes is given in Theorem \ref{Main}, where we show that all of these indexes equal to  $1/h(\phi).$

The organization of this paper is as follows. In Section 2, we define the Newton polyhedron of  smooth functions and related notions such as adapted coordinate system and height. In Section 3,  we study some properties of polynomial functions with vanishing Hessian determinant and prove that there are   adapted coordinate systems  for them.  Our  results concerning   oscillatory integrals and their proofs  are given in Section 4.  Section 5 is devoted  to the $L^p$-boundedness of maximal operators.  In Section 6,  we define (uniform) oscillation and  (uniform) contact indexes of hypersurfaces and determine their exact value. Finally, we discuss possible extensions of our results in Section 7.
\section{The Newton polyhedron}
Let $\phi$ be a smooth real-valued function defined on an open neighborhood $U$ of the origin in $\mathbb{R}^n$ with $\phi(0)=0$ and $\nabla \phi(0)=0.$ We consider the associated Taylor series at the origin
$$\phi(x)\approx \sum_{\alpha \in \mathbb{N}_0^n}c_{\alpha}x^{\alpha},$$
where $\mathbb{N}_0\colonequals \mathbb{N}\cup \{0\}$ and   $ c_{\alpha}\colonequals  \frac{1}{\alpha_1!...\alpha_n!}\frac{\partial^{|\alpha|}\phi(0)}{\partial x_1^{\alpha_1}...\partial x_n^{\alpha_n}}$ for the multi-index $\alpha=(\alpha_1,...,\alpha_n)\in \mathbb{N}_0$ and $|\alpha|\colonequals \alpha_1+...+\alpha_n$. 
The set
$$\mathcal{T}(\phi)\colonequals \{\alpha \in \mathbb{N}_0^n\setminus \{0\}:\,c_{\alpha}\neq 0\}  $$
is called the \emph{Taylor support} of $\phi$ at the origin. We assume $\mathcal{T}(\phi)$ is not empty, i.e., the function $\phi$ is \emph{finite type} at the origin. The \emph{Newton polyhedron} $\mathcal{N}(\phi)$ of $\phi$ at the origin is defined to be the convex hull of the union $$\bigcup_{\alpha \in \mathcal{T}(\phi) } (\alpha+\mathbb{R}^n_+),$$
where $\mathbb{R}_+\colonequals \{x\in \mathbb{R}:\,\,x\geq 0\}.$ The \emph{Newton diagram } $\mathcal{N}_d(\phi)$ in the sense of Varchenko \cite{Var} is the union of all compact faces of the Newton polyhedron. 
 Let $(d,...,d)\in \mathbb{R}^n$ be the coordinate of the intersection of the line $x_1=...=x_n$ and the boundary of the Newton polyhedron. The number $d=d(\phi)$ is called the \emph{distance}  between the Newton polyhedron and the origin.

The principal face $\pi (\phi)$ of the Newton polyhedron of $\phi$ is the face of the minimal dimension containing the point $(d,...,d)\in \mathbb{R}^n.$ We call the series
$$\phi_{\text{pr}}(x)\colonequals \sum_{\alpha\in \pi (\phi) } c_{\alpha} x^{\alpha}$$
 the \emph{principal part} of $\phi.$ In case $\pi (\phi)$ is compact, $\phi_{\text{pr}}(x)$ is a 
  polynomial; otherwise, we shall consider $\phi_{\text{pr}}(x)$ as a formal power series.

We note that the distance between the Newton polyhedron and the origin depends on the chosen \emph{local coordinate system} at the origin. Here by a local coordinate system we mean any smooth diffeomorphic map of a neighborhood of the origin into itself such that the origin is a fixed point.  The \emph{height} of the smooth function $\phi$ is defined by
\begin{equation}\label{height}h(\phi)\colonequals \sup\{d_x\},
\end{equation}
where the supremum is taken over all local smooth coordinates systems $x$ at the origin and $d_x$ is the distance between the Newton polyhedron and the origin in the  coordinate system $x$. We say a given smooth local coordinate system is \emph{adapted} to $\phi$ if $h(\phi)=d_x.$
If the supremum in \eqref{height} is attained  by a linear transformation of variables, then the  coordinate system obtained by applying this  linear transformation of variables  is said to be linearly adapted to $\phi$. 

In two dimensions, Varchenko \cite{Var} gave sufficient conditions for the adaptness of a given coordinate system and proved the existence of an adapted coordinate system for analytic functions without multiple components. This result is extended in \cite{Adap} for arbitrary real-analytic functions and arbitrary smooth functions of finite type.  In three dimensions, in general, no analogs of such coordinate
systems exist because an counterexample was constructed  by  Varchenko \cite{Var}. Adapted coordinate systems exist for  smooth convex functions in any dimension and it is obtained by applying linear change of variables to the original orthogonal coordinate system, see \cite{Sch, I.U}.

In the following  we state some results of \cite{Adap} concerning the adaptness of a given coordinate system for a smooth function in two dimensions. If the principal face of the Newton polyhedron $\mathcal{N}(\phi)$  is a compact edge in two dimensions, then it lies on a unique line $\kappa_1 t_1+\kappa_2 t_2=1$ with $\kappa_1,\kappa_2>0.$  The principal part $\phi_{\text{pr}}$ of $\phi$ is $\kappa-$homogeneous of degree one, i.e.,
$$\phi_{\text{pr}}(t^{\kappa_1}x_1,t^{\kappa_2}x_2)=t\phi_{\text{pr}}(x_1,x_2),\,\,\,\, \forall\,t>0, \,\,\,\forall\, x\in U. $$
$\kappa\colonequals(\kappa_1,\kappa_2)$ is called the principal weight and $d_h(\phi_{\text{pr}})\colonequals 1/(\kappa_1+\kappa_2)$ is called the homogeneous distance. We denote by $m(\phi_{\text{pr}})\colonequals \text{ord}_{\mathbb{S}^1}\phi_{\text{pr}}$
the maximal order of vanishing of $\phi_{\text{pr}}$ along the unit circle $\mathbb{S}^1$ centered at the origin.
\begin{proposition}\label{pr:a} 
Let $\phi$ be a smooth real-valued function defined on an open neighborhood $U$ of the origin in $\mathbb{R}^2$ with $\phi(0)=0$ and $\nabla \phi(0)=0.$ The given coordinate system  $x$ is adapted to $\phi$ if and only if one of the following conditions is satisfied:
\begin{itemize}
\item[(i)] The principal face $\pi (\phi)$ of the Newton polyhedron is a compact edge, and $m(\phi_{\mathrm{pr}})\leq d(\phi).$
\item[(ii)] $\pi (\phi)$ is a vertex.
\item[(iii)] $\pi (\phi)$ is an unbounded edge.
\end{itemize}
Moreover, in case of $(i)$ we have $h(\phi)=
h(\phi_{\mathrm{pr}})=
d_h(\phi_{\mathrm{pr}}).$
\end{proposition}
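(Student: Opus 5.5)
This is the Ikromov–Müller characterization of adapted coordinates in two dimensions, and I would prove it along the lines of \cite{Adap}. The main tool is the \emph{Newton–Puiseux/Varchenko algorithm}: look at how the Newton polyhedron changes under the non-linear shears $y_1=x_1-\psi(x_2)$ (and their mirror images with $x_1,x_2$ exchanged). We may assume $\kappa_2\ge\kappa_1$ after interchanging variables, so only shears of this form can enlarge the distance. The plan is to prove both directions separately, together with the identity $h(\phi)=h(\phi_{\mathrm{pr}})=d_h(\phi_{\mathrm{pr}})$ in case (i).

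\textbf{Sufficiency of (ii) and (iii).} Suppose $\pi(\phi)$ is a vertex or an unbounded edge. Then, after possibly swapping variables, the bisectrix meets $\partial\mathcal N(\phi)$ either at a lattice vertex $(d,d)$, or on a horizontal or vertical ray. I would show that for any smooth coordinate change $\Phi$ with $\Phi(0)=0$, the Taylor support of $\phi\circ\Phi$ still contains a point $\alpha$ with $\alpha_1\le d$ and $\alpha_2\le d$.

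For a vertex $(d,d)$, the monomial $x_1^dx_2^d$ has coefficient $c\ne0$. Under the change $x\mapsto(a x_1+bx_2+\dots,\,c'x_1+d'x_2+\dots)$, the terms of lowest total degree $2d$ restricted to the relevant faces produce a nonzero monomial $y^\beta$ with $|\beta|\le 2d$ that survives on the segment $\{\beta:\beta_1+\beta_2=2d\}$. This forces $d_y\le d$, because the lowest homogeneous part of $\phi\circ\Phi$ is the lowest homogeneous part of $\phi$ composed with a linear map, and so it has degree at most $2d$.

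For a horizontal unbounded edge $\{t_2=d\}$, $\phi=x_2^{d}\,g(x)$ holds formally with $g$ not flat in $x_1$ direction at order issues. Here I would use the integral invariant instead: the height is bounded by $1/\text{(decay exponent)}$ of $\int_{|\phi|<\varepsilon}dx\sim\varepsilon^{1/d}$, and a direct integral estimate on the model shows the decay $\varepsilon^{1/d}$. I would phrase the invariance through the critical integrability index $\sup\{\delta:\ |\phi|^{-\delta}\in L^1_{\mathrm{loc}}\}$, which is coordinate independent. It is at least $1/h(\phi)$ and equals $1/d$ here, by a scaling argument on the principal part.

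\textbf{Case (i).} When $\pi(\phi)$ is a compact edge, factor the $\kappa$-homogeneous polynomial $\phi_{\mathrm{pr}}=c\,x_1^{\nu_1}x_2^{\nu_2}\prod_j (x_2^{q}-\lambda_j x_1^{p})^{n_j}$ (complex roots allowed), where $\kappa_2/\kappa_1=p/q$.

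Sufficiency: $m(\phi_{\mathrm{pr}})\le d$ means every real root multiplicity is at most $d$. The integrability index of $\phi_{\mathrm{pr}}$ over a $\kappa$-homogeneous annulus is $\min(1/d_h,\,1/m)=1/d_h$, and $d_h=d$ on a compact principal edge. A perturbation argument (Phong–Stein–Sturm type) transfers this index to $\phi$, which gives $h(\phi)\le d$.

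Necessity: if $m(\phi_{\mathrm{pr}})>d$, there is a real root of multiplicity $n>d$. This forces $\kappa_1\ne\kappa_2$ with $p=1$ (the root is $x_2=\lambda x_1^{\,\kappa_2/\kappa_1}$ and the multiplicity bound forces integrality). The shear $y_2=x_2-\lambda x_1^{m}$ then moves the edge. In the new coordinates the Newton polyhedron has a vertex $(0,n)$-type contribution replaced, and the new distance exceeds $d$, so $x$ was not adapted.

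\textbf{Main obstacle.} The hardest step is the sufficiency in case (i) and the identity $h(\phi)=d_h(\phi_{\mathrm{pr}})$. This requires showing that \emph{no} smooth (not merely polynomial-shear) change increases $d$, which needs the coordinate-invariant characterization of height via the integrability or oscillation index and a stability estimate for perturbations of $\phi_{\mathrm{pr}}$ by higher $\kappa$-degree terms. I would try first to obtain this via a dyadic $\kappa$-homogeneous decomposition with a van der Corput–type sublevel bound on each piece, using $m\le d$ to sum the pieces.
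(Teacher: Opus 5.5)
\textbf{There is a genuine gap in case (ii), and the key inequality in case (iii) is reversed.}

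\emph{Case (ii).} The lowest homogeneous degree $k$ of $\phi\circ\Phi$ is invariant under the coordinate change. But it only says that $\mathcal{N}(\phi\circ\Phi)\subset\{t_1+t_2\ge k\}$ and that this polyhedron contains some $\beta$ with $|\beta|=k$. That gives $d_y\ge k/2$ and $d_y\le\max(\beta_1,\beta_2)$, never $d_y\le d$. The inference ``a nonzero homogeneous part of degree $2d$ forces $d_y\le d$'' is false in general: for $(x_2-x_1)^4$ one has $d=2$, but $d_y=4$ for $y_2=x_2-x_1$. Moreover, when $\pi(\phi)$ is a vertex the lowest degree need not be $2d$. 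For $\phi=x_2^{10}+x_1^2x_2^2+x_1^3$ the principal face is the vertex $(2,2)$, yet the lowest homogeneous part is $x_1^3$, so your reasoning yields only $d_y\le 3$. As written, (ii) is not proved.

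\emph{Case (iii).} The integrability index $c(\phi)$ is \emph{at most} $1/h(\phi)$, not at least. In every coordinate system $y$, a box $|y_j|\le\varepsilon^{\kappa_j}$ adapted to a supporting line through $(d_y,d_y)$ (or, for an unbounded principal face, arbitrarily close to it) gives $|\{|\phi|<\varepsilon\}|\gtrsim\varepsilon^{1/d_y+\delta}$. Hence $c(\phi)\le 1/d_y$ for all $y$. Combining this with $c(\phi)\ge 1/d$ in the given coordinates yields $h(\phi)\le d$. With ``at least $1/h(\phi)$'' you only get the trivial $h(\phi)\ge d$. Also, a ``scaling argument on the principal part'' has no meaning for an unbounded edge, where $\phi_{\mathrm{pr}}$ is only a formal series.

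\textbf{Both cases can be repaired inside your own framework.} Your dyadic $\kappa$-decomposition with van der Corput sublevel bounds works for any weight $\kappa$ whose line $\kappa_1t_1+\kappa_2t_2=1$ supports $\mathcal{N}(\phi)$. It gives $|\{|\phi|<\varepsilon\}|\lesssim\varepsilon^{1/\max\{d_h(\phi_\kappa),\,m(\phi_\kappa)\}}\log(1/\varepsilon)$.
In (ii), take $\kappa$ supporting $\mathcal{N}(\phi)$ only at $(d,d)$. Then $\phi_\kappa=c\,x_1^dx_2^d$ and $d_h=m=d$.
In (iii), write $\pi(\phi)=\{(t_1,d):t_1\ge A\}$ with $A<d$. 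Take $\kappa_1>0$ small and the supporting line through $(A,d)$ only. Then $\phi_\kappa=c\,x_1^Ax_2^d$ with $d_h(\phi_\kappa)<d=m(\phi_\kappa)$.

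\textbf{Necessity part.} Your argument is the standard one, but three points need correcting:
\begin{itemize}
\item $m(\phi_{\mathrm{pr}})>d$ does not force $\kappa_1\ne\kappa_2$; take $(x_2-x_1)^3$, where the shear is linear.
\item Integrality means $q=1$, not $p=1$, in your notation.
\item What excludes $q\ge2$ is $n\le 1/(q\kappa_2)\le 1/(\kappa_1+\kappa_2)=d$. The axis multiplicities are $<d$ because $(d,d)$ is interior to the principal edge.
\end{itemize}

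\textbf{Comparison.} The paper gives no proof of this proposition and simply cites \cite{Adap}, where the result is established algebraically via Newton polygons and Puiseux roots. With the repairs above, your analytic route through the coordinate-invariant sublevel exponent is a legitimate alternative to that argument.
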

The proof of Proposition \ref{pr:a} can be found in \cite{Adap}.

\section{Adapted coordinate system}

In this section we prove that there is an adapted coordinate system for any polynomial function whose Hessian determinant is identically zero.

Let $\phi:\mathbb{R}^n\to \mathbb{R}$ be a polynomial function. Let $\det{(D^2\phi(x))}$ denote the Hessian determinant
$$\det{(D^2\phi(x))}\colonequals \begin{vmatrix}
\partial^2_{x_1x_1}\phi(x) &...&\partial^2_{x_1x_n}\phi (x)\\\vdots  & & \vdots\\
\partial^2_{x_nx_1}\phi(x) &...&\partial^2_{x_nx_n}\phi (x)
\end{vmatrix},\,\,\,\,x\in \mathbb{R}^n.$$
When $\phi$ is  a homogeneous polynomial, Hesse \cite{H1, H2} claimed in 1851 that the equality
$$\det{(D^2\phi(x))}=0$$
 holds for all $x\in \mathbb{R}^n$ if and only if, after a suitable linear  transformation of  variables,  $\phi$ depends on at most $n-1$ variables. The \emph{if} part in this claim is trivial and the \emph{only if} part is not obvious. One can easily show  that this claim holds true for  quadratic forms. It is also true for ternary and quaternary cubic forms, see \cite{Pasch}.  However, in 1876, Gordan and Noether \cite{GN, Los, J.B} showed that Hesse's proof in \cite{H1, H2} is not correct in general. More precisely, they showed that Hesse's claim holds for $n\leq 4$ and constructed counterexamples  for $n\geq 5.$ 
For non-homogeneous polynomials  the following result has been obtained in \cite[Theorem 3.3]{Bondt}:
\begin{theorem}\label{Bon}
Let  $\phi:\mathbb{R}^3\to \mathbb{R}$ be a  polynomial  such that $\phi(0)=0$ and $\nabla \phi(0)=0.$ Assume  $$\det{(D^2\phi(x))}=0$$
holds   for all $x=(x_1,x_2,x_3)\in \mathbb{R}^3$.  Then
then there exists an  invertible matrix $A\in \mathbb{R}^{3\times 3}$ such that $\phi(Ax)$ either depends on at most two  variable $x_1$ and $x_2$ or has the form
\begin{equation}\label{form}
\phi(Ax)=Q_1(x_1)+Q_2(x_1)x_2+Q_3(x_1)x_3,
\end{equation}
where $Q_1,Q_2,Q_3:\mathbb{R}\to \mathbb{R}$ are polynomial functions.
\end{theorem}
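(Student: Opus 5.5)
The plan is to study the polynomial gradient map $\nabla\phi:\mathbb{R}^3\to\mathbb{R}^3$ and exploit the classical principle behind the Gordan--Noether theory. Identically vanishing Hessian determinant means the Jacobian of $\nabla\phi$ is identically singular. Hence the components $\partial_1\phi,\partial_2\phi,\partial_3\phi$ are algebraically dependent over $\mathbb{R}$ (equivalently over $\mathbb{C}$, after complexifying). So there is a nonzero irreducible polynomial $R(y_1,y_2,y_3)$ with $R(\nabla\phi)\equiv 0$.

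Differentiating this relation gives $D^2\phi(x)\,\nabla R(\nabla\phi(x))=0$. Thus the vector field $v(x)\colonequals(\nabla R)(\nabla\phi(x))$ lies in the kernel of the Hessian. Moreover, $v$ is not identically zero if $R$ is chosen of minimal degree. The key steps, in order, are as follows.

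\textbf{Step 1: kernel direction.} Show that the kernel direction is constant along the fibres of $\nabla\phi$. The standard Gordan--Noether argument uses the identity $\sum_j v_j(x)\,\partial_j(\partial_i\phi)=0$. From it one deduces that each component of $v$, viewed as a function of $x$, is annihilated by the derivation $D=\sum_j v_j\partial_j$. Hence $\nabla\phi$ and $v$ are constant along the integral curves of $D$. Because $D$ has constant coefficients along its own integral curves, these curves are straight lines. Consequently, $\phi$ is affine along the lines $x+tv(x)$, and $\nabla\phi$ is constant on them.

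\textbf{Step 2: case distinction on the rank of the image of $v$.} The image of $v$ is the Gordan--Noether locus $Z$ of directions.
\begin{itemize}
\item If $v$ has constant direction (rank $0$ projectively), then after a linear change $A$ the field $v$ is parallel to $e_3$. Then $\partial_3\phi$ is constant, and since $\nabla\phi(0)=0$ it vanishes identically. Hence $\phi$ depends only on $x_1,x_2$.
\item Otherwise the direction map has projective image a curve. In three variables it cannot be a surface, because $\nabla\phi$ has rank at most $2$ and the directions must satisfy $\sum_j v_j\partial_j v=0$. In dimension $3$ this curve must in fact be a line in $\mathbb{P}^2$; this is the low-dimensional Gordan--Noether fact that the cone is linear for $n\le4$. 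So all kernel vectors lie in a fixed plane, which we make $\{x_1=0\}$ after a linear change.
\end{itemize}
Then $\partial_1\phi$ is annihilated by $D$ with $D$ involving only $\partial_2,\partial_3$. Combined with the line structure, the level sets of $x_1$ are foliated by lines on which $\nabla\phi$ is constant. One should then deduce that $\partial_2\phi$ and $\partial_3\phi$ depend only on $x_1$, i.e.\ $\phi$ is affine in $(x_2,x_3)$ with coefficients $Q_2(x_1),Q_3(x_1)$, giving the form \eqref{form}.

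\textbf{Main obstacle.} The main obstacle is the last deduction. The lines in each plane $x_1=c$ could a priori vary, as in the non-homogeneous examples, so that $\partial_2\phi,\partial_3\phi$ depend on $(x_2,x_3)$ through a function constant on the lines. I would first try to show that the restriction $\psi_c=\phi(c,\cdot,\cdot)$ has vanishing $2\times2$ Hessian; this is forced because the $2\times2$ minor together with the kernel vector in that plane makes $D^2\psi_c$ degenerate. For polynomials in two variables, vanishing Hessian implies $\psi_c=f(\ell(x_2,x_3))+$affine, with $\ell$ linear.

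It then remains to show that the direction of $\ell$ is independent of $c$ and that $f$ is affine. If $f$ were nonlinear, the kernel of $D^2\phi$ would have to be the fixed direction $\ell^\perp$ in the plane. Re-entering the full $3\times3$ determinant, polynomiality should then force either $\ell$ constant (which puts us in the two-variable case) or $f$ affine (which gives \eqref{form}). I expect this bookkeeping with polynomial identities to be where the real work lies, and I would borrow the corresponding argument of \cite{Bondt}.
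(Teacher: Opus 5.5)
\textbf{Gap in Step 2.} The paper gives no proof of Theorem~\ref{Bon}; it imports it from \cite[Theorem 3.3]{Bondt}. Your Step~1 is correct and standard: with $v=\nabla R(\nabla\phi)$ one has $D^2\phi\,v=0$ and $(\mathcal{J}v)\,v=0$, and $\nabla\phi$, $v$ are constant on the lines $x+tv(x)$.

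The gap is the central claim of Step~2, that the directions $[v(x)]$ fill at most a line of $\mathbb{P}^2$. This is essentially the whole content of the theorem, and neither reason you give establishes it.

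\emph{The first reason fails.} From $(\mathcal{J}v)v=0$ and $\operatorname{rank}D^2\phi\le 2$ you only get that the \emph{affine} image of $v$ has dimension $\le 2$. Since $v$ is not homogeneous, that image need not be a cone, and its projectivization can be open in $\mathbb{P}^2$. Concretely, take $\phi(x)=(x_1^2+x_2^2)/x_3$ near $(0,0,1)$. It is homogeneous of degree one, so $D^2\phi(x)\,x=0$. With $R(y)=y_3+\tfrac14(y_1^2+y_2^2)$ one gets $v=x/x_3$, and all the local identities you use in Steps~1--2 hold. Yet the kernel directions $[x]$ form an open subset of $\mathbb{P}^2$; its lines $x+tv(x)$ all pass through the origin, where $\phi$ is singular. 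So the claim can only come from a global use of polynomiality, which your proposal never makes.

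\emph{The second reason fails too.} The appeal to Gordan--Noether for $n\le 4$ does not supply this, because that theorem is about forms and homogenizing destroys the hypothesis. For example, $x_1x_2+x_1^2x_3$ has singular Hessian, while its homogenization $x_0x_1x_2+x_1^2x_3$ has Hessian determinant $4x_1^4$.

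\textbf{What is missing, and what is actually easy.} The missing input is global: for a polynomial, $\nabla\phi(x+tv(x))=\nabla\phi(x)$ holds for all $x\in\mathbb{C}^3$ and $t\in\mathbb{C}$. Hence two complex lines of the family carrying different values of $\nabla\phi$ can never meet. One has to exploit this to exclude every configuration except (a) parallel lines, or (b) lines whose points at infinity lie on a single line at infinity. This is precisely what the paper outsources to \cite{Bondt}.

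By contrast, the step you single out as the main obstacle is short once the plane claim is known:
\begin{itemize}
\item If $v_1\equiv 0$, each line $x+\mathbb{R}v(x)$ lies in the plane $\{x_1=c\}$ through $x$. Two such lines with non-parallel directions would meet at a point where $v$ takes both values.
\item Hence $v(x)\in\mathbb{R}\,w(x_1)$ for a polynomial $w=(0,w_2,w_3)$, e.g.\ $w(x_1)=v(x_1,a_2,a_3)$ for generic fixed $a_2,a_3$.
\item Then $D^2\phi\,w=0$ gives $w_2\partial_2\phi+w_3\partial_3\phi=g(x_1)$ from the last two rows. The first row then gives $w_2'\partial_2\phi+w_3'\partial_3\phi=g'(x_1)$.
\item If $w_2w_3'-w_3w_2'\not\equiv 0$, Cramer's rule puts $\partial_2\phi,\partial_3\phi$ in $\mathbb{R}[x_1]$, which is \eqref{form}.
\item Otherwise $[w]$ is constant and $\phi$ depends on two variables.
\end{itemize}
No detour through $\psi_c$, $f$, $\ell$ is needed. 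As it stands, your proposal proves the easy parts and assumes the hard one.
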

We use this algebraic result to prove the main theorem of this section:
\begin{theorem}\label{th:adap}
Let  $\phi:\mathbb{R}^3\to \mathbb{R}$ be a non-trivial polynomial function such that $\phi(0)=0$ and $\nabla \phi(0)=0.$  Assume $$\det{(D^2\phi(x))}=0$$
holds  for all $x=(x_1,x_2,x_3)\in \mathbb{R}^3$.   Then there exists an adapted coordinate system to $\phi$ in $\mathbb{R}^3.$ Moreover, let  $A\in \mathbb{R}^{3\times 3}$ be the matrix given in  Theorem \ref{Bon}. If $\phi(Ax)$ only depends on one variable, then the given coordinate  system $x$ is adapted. If it depends on two variables, then an adapted coordinate system can be found by either $y_1 = x_1$, $y_2 = x_2 - \psi(x_1)$ or $y_1 = x_2$, $y_2 = x_1 - \psi(x_2)$, where $\psi$ represents a real-valued analytic function. If $\phi(Ax)$ has a form as in \eqref{form}, then an adapted coordinate system can be obtained   by applying a linear transformation of variables.
\end{theorem}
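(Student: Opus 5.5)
Apply Theorem \ref{Bon} and treat its three cases separately. In each case the goal is to exhibit a coordinate system $y$ whose Newton distance $d_y$ is an upper bound for $d_z$ in every smooth local coordinate system $z$; that coordinate system is then adapted by \eqref{height}. First we reduce to the matrix $A$: since $\phi(0)=0$ and $\nabla\phi(0)=0$, the function $\tilde\phi(x)=\phi(Ax)$ still vanishes to second order at $0$, and linear changes are admissible local coordinate systems, so it suffices to find adapted coordinates for $\tilde\phi$.

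\textbf{One variable.} If $\tilde\phi=Q(x_1)$ with $Q$ vanishing to order $m\ge 2$, then $\mathcal N(\tilde\phi)=(m,0,0)+\mathbb R^3_+$. The diagonal meets its boundary at $(m,m,m)$, so $d_x=m$. In any other coordinates $z$, the function $\tilde\phi\circ z^{-1}$ restricted to a generic line through $0$ vanishes to order exactly $m$, because the order of vanishing at a point is diffeomorphism invariant. Hence $\tilde\phi\circ z^{-1}$ has a Taylor monomial with $|\alpha|=m$, and $d_z\le m$. The $x$ system is therefore adapted.

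\textbf{Two variables.} Write $\tilde\phi=F(x_1,x_2)$, where $F$ is a nontrivial polynomial, hence real-analytic. By \cite{Adap}, equivalently Proposition \ref{pr:a}, there is an adapted system for $F$ in $\mathbb R^2$ of the form $y_1=x_1$, $y_2=x_2-\psi(x_1)$ or with the roles of $x_1,x_2$ swapped, where $\psi$ is analytic. Keep $y_3=x_3$. Since $x_3$ does not appear, $\mathcal N(\tilde\phi)=\mathcal N(F)\times\mathbb R_+$ in these coordinates, and one checks that the three-dimensional distance equals the planar one: $d_y=d(F)=h(F)$. The key remaining step is $h(\tilde\phi)\le h(F)$. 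I would get this from oscillatory integrals. It is standard, via Varchenko-type lower bounds on integrals over the simplex-like region determined by the Newton polyhedron, that
\[
\int e^{i\lambda\tilde\phi}\eta\,dx
\]
decays no faster than $\lambda^{-1/d_z}$, up to logarithms, for any coordinates $z$ and suitable $\eta\ge0$. Alternatively one can use sublevel-set volumes, which are coordinate invariant:
\[
|\{|\tilde\phi|<\varepsilon\}\cap U|\gtrsim \varepsilon^{1/d_z}.
\]
Because $\tilde\phi$ does not depend on $x_3$, this volume equals the planar one for $F$, which is $\approx\varepsilon^{1/h(F)}|\log\varepsilon|^{k}$ by \cite{Ik.Mu}. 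This forces $d_z\le h(F)$. I would take the sublevel-set route, using the fact that the Newton polyhedron in coordinates $z$ yields the lower bound $\varepsilon^{1/d_z}$ from the region $|z^\alpha|\lesssim\varepsilon$.

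\textbf{Form \eqref{form}.} Now $\tilde\phi=Q_1(x_1)+Q_2(x_1)x_2+Q_3(x_1)x_3$. Let $a_i=\mathrm{ord}_0Q_i$. By a linear change in $(x_2,x_3)$ we arrange $a_3>a_2$, or $Q_3\equiv0$, which reduces to the two-variable case. Any term $cx_1^{k}$ with $k\ge a_2$ in $Q_1$ can be moved into the $x_2$-term, but only via the nonlinear map $x_2\mapsto x_2+\dots$, and the statement asserts that linear maps suffice. The plan is therefore to compute the Newton polyhedron directly: its vertices are $(a_1,0,0)$, $(a_2,1,0)$ and $(a_3,0,1)$. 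Computing the distance gives $d_x=\min\{a_1,a_2/(1-\text{…})\}$, but since the $x_2,x_3$ exponents are at most $1$ the distance is at most $1$ whenever $a_2$ or $a_3$ is finite. Indeed, solving for the diagonal point gives $d\le 1$ from the face through $(a_2,1,0)$ and $(a_3,0,1)$ together with the unbounded directions. After a linear change, for instance swapping to make the $x_1$-exponent of the vertices minimal, one gets $d_x=\min(1,\dots)$. The upper bound $h(\tilde\phi)\le 1$ follows because $\partial_{x_2}\tilde\phi=Q_2(x_1)$ and $\partial_{x_3}\tilde\phi=Q_3(x_1)$ do not vanish off $\{x_1=0\}$, so $\tilde\phi$ is linear along fibers. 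By Fubini in $x_2$, the sublevel set satisfies $|\{|\tilde\phi|<\varepsilon\}|\gtrsim\varepsilon$ only. Matching this with the Newton distance attained after a suitable linear change is the main obstacle. I expect to handle it by a case analysis on whether $a_1<a_2$, where the chosen linear change may need to mix $x_1$ with $x_2$.
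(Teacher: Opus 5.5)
Your treatment of the form \eqref{form} contains a genuine error, and the step that actually proves adaptedness there is missing.

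\textbf{The distance computation is wrong.} The Newton distance is not at most $1$. Every point of $\mathcal{N}(\tilde\phi)$ satisfies $t_1\ge\min\{a_1,a_2,a_3\}$. On the other hand $(a_1,a_1,a_1)\in(a_1,0,0)+\mathbb{R}^3_+$, and, since $a_2\ge 1$, also $(a_2,a_2,a_2)\in(a_2,1,0)+\mathbb{R}^3_+$. So with $a_2\le a_3$ you get $d_x=\min\{a_1,a_2\}$ already in the coordinates produced by $A$. That the exponents of $x_2,x_3$ are at most $1$ is irrelevant, because the diagonal point is constrained by its first coordinate.

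\textbf{The height bound is false.} The claim $h(\tilde\phi)\le 1$ fails: for $x_1^5x_2+x_1^6x_3$ the height is $5$. Your Fubini argument goes the other way. For fixed $x_1$, the $x_2$-measure of $\{|\tilde\phi|<\varepsilon\}$ is at most about $\min\{1,\varepsilon/|Q_2(x_1)|\}$, which degenerates as $x_1\to0$. Already for $\tilde\phi=x_1^{a_2}x_2$, integrating in $x_1$ gives $\varepsilon^{1/a_2}$ (times $\log(1/\varepsilon)$ if $a_2=1$), not $\varepsilon$. So the ``main obstacle'' you describe is an artifact of the miscomputation, and no case analysis mixing $x_1$ into $x_2$ is needed.

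\textbf{The missing step.} What you need is the bound $d_y\le\min\{a_1,a_2\}$ in every smooth coordinate system $y$. It follows from the same invariance you already used in the one-variable case: for $x=\varphi(y)$, the lowest homogeneous part of $\tilde\phi\circ\varphi$ is $P\circ D\varphi(0)$, where $P$ is that of $\tilde\phi$.
\begin{itemize}
\item[(i)] If $a_1\le a_2$, the order of vanishing is $a_1$, so $d_y\le a_1$.
\item[(ii)] If $a_2<a_1$, then $P=x_1^{a_2}(c_1x_1+c_2x_2+c_3x_3)$. Here $c_2\neq0$ is the leading coefficient of $Q_2$, and $c_1,c_3$ are the coefficients of $x_1^{a_2+1}$ in $Q_1$ and of $x_1^{a_2}$ in $Q_3$. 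In coordinates $y$ this becomes $M_1^{a_2}M_2$ with independent linear forms $M_1,M_2$. By unique factorization it is not a single pure power $b\,y_i^{a_2+1}$. Hence it contains either a mixed monomial, all of whose exponents are $\le a_2$, or two pure powers, whose midpoint gives $d_y\le(a_2+1)/2\le a_2$.
\end{itemize}

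\textbf{Comparison with the paper.} The paper reaches the same conclusion differently. It permutes the $y_i$ so that $\partial_{y_i}\varphi_i(0)\neq0$, then checks that $(\nu_1,0,0)$, resp.\ $(\nu_2,1,0)$, lies in $\mathcal{N}(\Phi)$. Its extra linear change $z_2=c_1x_1+c_2x_2+c_3x_3$ only normalizes $\phi$ to $z_1^{\nu_2}z_2$ plus terms of degree $\ge\nu_2+2$ for later use.

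\textbf{Your first two cases are fine.} In the two-variable case, your sublevel-set comparison actually justifies $h(\tilde\phi)=h(F)$, which the paper only asserts when it says the problem reduces to $\mathbb{R}^2$. For smooth coordinate changes, perturb the supporting weight to have strictly positive entries and use a finite Taylor expansion. Drop the oscillatory-integral variant: it would need a non-cancellation lower bound that you cannot take for granted in three dimensions.
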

\begin{proof}
We divide the proof into two parts.

\textbf{Part 1:} Assume $ \phi(Ax)$ depends on at most two variables $x_1$ and $x_2$.

$(a)$ Let $ \phi(Ax)$ depend on only one variable $x_1.$ Then we can write
$$ \phi(Ax)=x_1^{\nu} Q(x_1),$$
where $ Q$ is a polynomial function with $ Q(0)\neq 0$ and  $\nu\geq 2$ is a positive integer number.  Then the Newton polyhedron of $ \phi(Ax)$ is
$$\{(t_1, t_2, t_3)\in \mathbb{R}^3_+:\, t_1\geq \nu\}$$
and the distance $d_x=\nu.$
We claim that the given  coordinate system  $x$ is adapted. To prove that we consider any smooth local coordinate system $y$ at the origin given by
 \begin{align}\label{new.coor}
 &x_1=\varphi_1(y_1,y_2,y_3), \nonumber\\
 &x_2=\varphi_2(y_1,y_2,y_3), \nonumber \\ & x_3=\varphi_3(y_1,y_2,y_3).
 \end{align}
 Let $d_y$ denote the distance between the Newton polyhedron and the origin in the coordinate system $y.$ By definition, the coordinate system $x$ is adapted if $d_y\leq d_x$ for any smooth local coordinate system $y.$ We will prove that this inequality holds.
Since the Jacobian  of $\varphi$ must be non-zero at the origin, by flipping coordinates $y_1,$ $y_2,$ and $y_3,$ if necessary, we may assume without loss of generality that $\partial_{y_i}\varphi_i(0,0,0)\neq 0 $ for all $i\in \{1,2,3\}.$  
 In this new coordinate system $\phi (Ax)$ equals
 $$\Phi(y)=\varphi_1^{\nu}(y) Q(\varphi_1^{\nu}(y)).$$
Since $\partial_{y_1}\varphi_1(0)\neq 0$ and $Q(0)\neq 0,$ by product rule, $(\nu,0,0)$ belongs to the Newton polyhedron of $\Phi(y).$ This means  the distance $d_y$ can not be greater than $d_x=\nu.$ Therefore, the given coordinate system $x$ is adapted.

$(b)$ Let $ \phi(Ax)$ depend on two variables   $x_1$ and $x_2.$ Then the problem of finding an adapted coordinate system in $\mathbb{R}^3$ reduces to find  an adapted coordinate system in  $\mathbb{R}^2.$    If the given coordinate system $x$ in $\mathbb{R}^2$ is not adapted, then \cite[Theorem 4.2]{Adap} shows that an adapted coordinate system in $\mathbb{R}^2$ can be obtained by either \begin{align*}
y_1&\colonequals x_1,\\
y_2& \colonequals x_2-\psi(x_1)
\end{align*}
or
\begin{align*}
y_1&\colonequals x_2,\\
y_2&\colonequals x_1-\psi(x_2),
\end{align*}
where  $\psi$ is a real analytic function.

\bigskip
\textbf{Part 2:} Assume  $ \phi(Ax)$ has the form
\begin{equation*}
\phi(Ax)=Q_1(x_1)+Q_2(x_1)x_2+Q_3(x_1)x_3,
\end{equation*}
where $Q_1, Q_2, Q_3:\mathbb{R}\to \mathbb{R}$ are polynomial functions.
Let $\nu_1,\nu_2,\nu_3\in \mathbb{N}$   denote the multiplicities of $x_1=0$ as the root  of $Q_1(x_1),$ $Q_2(x_1),$ and $Q_3(x_1),$ respectively.  Since $\phi(0)=0$ and $\nabla \phi(0)=0,$ we have  $\nu_1\geq 2,$ $\nu_2\geq 1,
$ and $\nu_3\geq 1.$

If  $Q_2(x_1)$ or  $Q_3(x_1)$ is identically zero, then $\phi(Ax)$ depends on at most two variables and so Part 1  implies the proof. Hence, we assume that both $Q_2(x_1)$ and $Q_3(x_1)$ are not identically zero. $Q_1(x_1)$ can be identically zero. In this case, we formally put $\nu_1=\infty$.

 We can write
$$Q_i(x_1)= x_1^{\nu_j}\tilde Q_i(x_1),\,\,\,i\in\{1,2,3\} $$
where $\tilde Q_i$ are polynomial functions. According to our assumption we have $\tilde Q_i(0)\neq 0$    for $i\in \{2,3\},$ and $\tilde Q_1(0)\neq 0$  if  $Q_1$ is not identically zero.
The Newton polyhedron of $\phi(Ax)$ is the convex hull of
$$\{(\nu_2,1,0)+\mathbb{R}_+^3\}\cup \{(\nu_1,0,0)+\mathbb{R}_+^3\} \cup \{(\nu_3,0,1)+\mathbb{R}_+^3\}.$$
Without loss of generality we can assume $\nu_3\geq \nu_2.$  Then  $\min\{\nu_1,\nu_2,\nu_3\}$  can be $\nu_1$ or $\nu_2.$
Hence, we consider two different cases:

\textbf{Case 1: } Let $\nu_1= \min\{\nu_1,\nu_2,\nu_3\}.$
 In this case the Newton polyhedron of $\phi(Ax)$ is
$$\{(t_1, t_2, t_3)\in \mathbb{R}^3_{+}: \,t_1\geq \nu_1 \}$$
and the distance $d_x=\nu_1.$ By definition, the coordinate system $x$ is adapted if $d_y\leq d_x$ for any smooth local coordinate system $y.$ We will prove that this inequality holds.
  We consider any smooth  local coordinate system $y$ as in \eqref{new.coor}.  In these new coordinates $\phi (Ax)$ equals
 \begin{align*}
 \Phi(y)
 =\varphi_1^{\nu_1}(y)\tilde Q_1(\varphi_1(y))+\varphi_1^{\nu_2}(y) \varphi_2(y)\tilde Q_2(\varphi_1(y))+\varphi_1^{\nu_3}(y) \varphi_3(y)\tilde Q_3(\varphi_1(y)).
 \end{align*}
 Using $\partial_{y_1}\varphi_1(0)\neq 0$ and the product rule, one can show that $(\nu_1,0,0)$ belongs to the Newton polyhedron of $\Phi(y).$ Hence, the distance $d_y$ can not be greater than $d_x.$

 \textbf{Case 2:} Let $\nu_2= \min\{\nu_1,\nu_2,\nu_3\}.$  If $\nu_1=\nu_2,$ then Case 1 implies the proof. Hence, we assume $\nu_1>\nu_2.$ 
 We introduce the following notations
 \begin{align*}
 c_1&\colonequals \begin{cases} \tilde Q_1(0)\quad &\mbox{if} \quad \nu_1=\nu_2+1\\
 0 \quad & \mbox{if} \quad \nu_1>\nu_2+1,
  \end{cases}\\
 c_2&\colonequals \tilde Q_2(0)\neq 0, \\
  c_3&\colonequals \begin{cases} \tilde Q_3(0)\quad &\mbox{if} \quad \nu_3=\nu_2\\
 0 \quad &\mbox{if} \quad \nu_3>\nu_2.
  \end{cases}
   \end{align*}
With these notations we can write
\begin{equation*}
\phi(Ax)=x_1^{\nu_2}(c_1x_1+c_2x_2+c_3x_3)+x_1^{\nu_1}(\tilde Q_1(x_1)-c_1)+x_1^{\nu_2}x_2(\tilde Q_2(x_1)-c_2)+
x_1^{\nu_3}x_3(\tilde Q_3(x_1)-c_3).
\end{equation*}
 We change the variables
\begin{align*}
&z_1=x_1,\\
&z_2=c_1x_1+c_2x_2+c_3x_3\\
&z_3=x_3.
\end{align*}
Let $B$ denote  the matrix  product of $A\in \mathbb{R}^{3\times 3}$ and $\begin{pmatrix}
1&0&0\\
-c_1/c_2&1/c_2&-c_3/c_2\\
0&0&1
\end{pmatrix}. $
After changing the variables we get
\begin{align*}
\phi(Bz)=&z_1^{\nu_2}z_2+z_1^{\nu_1}(\tilde Q_1(z_1)-c_1)-\frac{c_1}{c_2}z_1^{\nu_2+1}(\tilde Q_2(z_1)-c_2)\\ &+\frac{z_1^{\nu_2}z_2}{c_2}(\tilde Q_2(z_1)-c_2)+\left(z_1^{\nu_3}(\tilde Q_3(z_1)-c_3)-\frac{c_3z_1^{\nu_2} }{c_2}(\tilde Q_2(z_1)-c_2)\right)z_3.
\end{align*}
Since $\nu_2\geq 1,$ we have $d_z=\nu_2.$ We claim that the coordinate system $z$ is adapted. To prove that we consider any smooth local coordinate system $y$ as in  \eqref{new.coor}. In this new coordinates, $\phi(Bz)$  equals
\begin{align*}
\Phi(y)=&
\varphi_1^{\nu_2}(y)\varphi_2(y)+\varphi_1^{\nu_1}(y)(\tilde Q_1(\varphi_1(y))-c_1)\\
&-\frac{c_1}{c_2}\varphi_1^{\nu_2+1}(y)(\tilde Q_2(\varphi_1(y))-c_2) +\frac{\varphi_1^{\nu_2}(y)\varphi_2(y)}{c_2}(\tilde Q_2(\varphi_1(y))-c_2)\\
&+\left(\varphi_1^{\nu_3}(y)(\tilde Q_3(\varphi_1(y))-c_3)-\frac{c_3\varphi_1^{\nu_2}(y) }{c_2}(\tilde Q_2(\varphi_1(y))-c_2)\right)\varphi_3(y).
\end{align*}
Note that  every monomial of 
\begin{align*}
z_1^{\nu_1}(\tilde Q_1(z_1)-c_1)&-\frac{c_1}{c_2}z_1^{\nu_2+1}(\tilde Q_2(z_1)-c_2) +\frac{z_1^{\nu_2}z_2}{c_2}(\tilde Q_2(z_1)-c_2)\\ &  +\left(z_1^{\nu_3}(\tilde Q_3(z_1)-c_3)-\frac{c_3z_1^{\nu_2} }{c_2}(\tilde Q_2(z_1)-c_2)\right)z_3
\end{align*}
 has a degree at least  $\nu_2+2.$ Therefore,
using $\partial_{y_1}\varphi_1(0)\neq 0,$ $\partial_{y_2}\varphi_2(0)\neq 0,$ and the product rule, one can check that $(\nu_2,1,0)$ belongs to the Newton polyhedron of $\Phi(y).$ Hence,  $d_y$ can not be greater than $d_z=\nu_2.$

\end{proof}

The proof of Theorem \ref{th:adap} yields the following corollary:
\begin{corollary}\label{cor:obvious}
Let  $\phi:\mathbb{R}^3\to \mathbb{R}$ be a  polynomial function  satisfying the conditions of Theorem   \ref{th:adap}. Then, after a linear change of variables,  either  $\phi$ depends on one or two variables  or there exists a linearly adapted coordinate system.
\end{corollary}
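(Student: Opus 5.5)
The plan is to read Corollary \ref{cor:obvious} off the case analysis in the proof of Theorem \ref{th:adap}, using Theorem \ref{Bon} as the starting point. First apply Theorem \ref{Bon} to obtain the invertible matrix $A$. Either $\phi(Ax)$ depends on at most two variables, or it has the form \eqref{form}. In the first alternative the linear change of variables $x\mapsto Ax$ already shows that $\phi$ depends on one or two variables, so there is nothing further to prove.

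In the second alternative, suppose first that $Q_2\equiv 0$ or $Q_3\equiv 0$. Then $\phi(Ax)$ again depends on at most two variables, which is the first option of the corollary. So assume that neither $Q_2$ nor $Q_3$ vanishes identically, and, as in Part 2 of the proof of Theorem \ref{th:adap}, that $\nu_3\ge\nu_2$.

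\textbf{Case 1} ($\nu_1=\min\{\nu_1,\nu_2,\nu_3\}$). Here Part 2 shows that the coordinate system $x$ obtained from $A$ is adapted. This system comes from the linear map $A$, so it is linearly adapted.

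\textbf{Case 2} ($\nu_2<\nu_1$). Here Part 2 produces the coordinates $z$ through the matrix $B$. This $B$ is $A$ multiplied by an invertible constant matrix, invertible because $c_2=\tilde Q_2(0)\neq 0$. Part 2 also shows $d_y\le d_z=\nu_2$ for every smooth local coordinate system $y$, so $z$ is adapted. Since $z$ is obtained from the original coordinates by the linear transformation $B$, it is linearly adapted.

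The only step needing care is confirming that every change of variables used in Part 2 of the proof is genuinely linear, so that ``adapted'' can be upgraded to ``linearly adapted''. Unlike Part 1(b), no nonlinear shear $x_2-\psi(x_1)$ appears there. Beyond this check, no new estimates are required.
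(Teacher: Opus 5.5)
Your proposal is correct and follows the paper's route. The paper simply observes that the corollary follows from Theorem \ref{th:adap}: when $\phi(Ax)$ has the form \eqref{form}, its proof produces an adapted coordinate system through the linear maps $A$ (Case 1) or $B$ (Case 2). Your extra checks are exactly the content needed to upgrade ``adapted'' to ``linearly adapted'': $B$ is invertible because $c_2=\tilde Q_2(0)\neq 0$, and every change of variables in Part 2, including the relabelling that gives $\nu_3\ge\nu_2$, is linear.
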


\begin{corollary}\label{esseen}
Let  $\phi:\mathbb{R}^3\to \mathbb{R}$ be a  polynomial function  satisfying the conditions of Theorem   \ref{th:adap}.  Then, after  a
local analytic  change of variables,   $\phi$ depends on at most two variables.
\end{corollary}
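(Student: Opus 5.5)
The plan is to start from Theorem \ref{Bon}. If $\phi(Ax)$ already depends on at most two variables, the linear change $x\mapsto Ax$ is itself an analytic change of variables and we are done. So the only case needing work is the form \eqref{form}:
$$\phi(Ax)=Q_1(x_1)+Q_2(x_1)x_2+Q_3(x_1)x_3.$$
If $Q_2$ or $Q_3$ vanishes identically we are again done. Otherwise we want to combine $x_2$ and $x_3$ into a single new variable, locally near the origin.

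The natural move is to divide out the common power of $x_1$. As in Part 2 of the proof of Theorem \ref{th:adap}, write $Q_i(x_1)=x_1^{\nu_i}\tilde Q_i(x_1)$ with $\tilde Q_2(0)\neq 0$, and assume without loss of generality that $\nu_3\ge \nu_2$ (otherwise swap $x_2$ and $x_3$). Then
$$Q_2(x_1)x_2+Q_3(x_1)x_3=x_1^{\nu_2}\tilde Q_2(x_1)\Bigl(x_2+x_1^{\nu_3-\nu_2}\frac{\tilde Q_3(x_1)}{\tilde Q_2(x_1)}\,x_3\Bigr).$$
Since $\tilde Q_2(0)\neq 0$, the function $g(x_1):=x_1^{\nu_3-\nu_2}\tilde Q_3(x_1)/\tilde Q_2(x_1)$ is real analytic near $0$.

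Next, define the local change of variables
$$y_1=x_1,\qquad y_2=x_2+g(x_1)x_3,\qquad y_3=x_3.$$
This map is real analytic, fixes the origin, and has Jacobian matrix with determinant $1$; in fact it is triangular with unit diagonal up to the term $g'(x_1)x_3$ in the off-diagonal entry. By the inverse function theorem it is therefore a local analytic diffeomorphism, with explicit inverse $x_2=y_2-g(y_1)y_3$. In the new coordinates
$$\phi=Q_1(y_1)+y_1^{\nu_2}\tilde Q_2(y_1)\,y_2,$$
which depends only on $y_1$ and $y_2$. Composing with the linear map $A$ gives the desired local analytic change of variables.

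I expect no serious obstacle. The one point needing care is the analyticity of $g$, which requires dividing by $\tilde Q_2$ rather than by $Q_2$; this is exactly why the normalization $\nu_3\ge\nu_2$ is made first. The only other check is that the change of variables is merely local: it is valid only near the origin, where $\tilde Q_2\neq0$, and this is consistent with the word ``local'' in the statement.
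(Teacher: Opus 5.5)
Your argument is correct, but it follows a different route from the paper's.

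The paper reuses the two cases from the proof of Theorem \ref{th:adap}:
\begin{itemize}
\item When $\nu_1=\min\{\nu_1,\nu_2,\nu_3\}$, it writes $\phi(Ax)=x_1^{\nu_1}F(x)$ with $F(0)=\tilde Q_1(0)\neq 0$. It then sets $y_1=x_1|F(x)|^{1/\nu_1}$, so that $\phi=\mathrm{sign}(\tilde Q_1(0))\,y_1^{\nu_1}$ depends on a single variable.
\item When $\nu_2<\nu_1$, it first passes to the linear coordinates $z$ given by the matrix $B$. It then absorbs everything multiplying $z_1^{\nu_2}$ into a new variable $y_2$, which yields the monomial $y_1^{\nu_2}y_2$.
\end{itemize}

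You instead use one shear $y_2=x_2+g(x_1)x_3$ for all cases. It only merges the $x_2$- and $x_3$-terms and leaves $Q_1$ untouched. This avoids the case distinction and the root extraction $|F|^{1/\nu_1}$, and it gives an explicit inverse. It needs nothing beyond $\tilde Q_2(0)\neq 0$ once you normalize $\nu_3\ge\nu_2$.

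What the paper's route buys is a sharper normal form. It produces the monomials $\pm y_1^{\nu_1}$ and $y_1^{\nu_2}y_2$, which display the height $h(\phi)$ directly and show that one variable suffices in the first case. Your form $Q_1(y_1)+y_1^{\nu_2}\tilde Q_2(y_1)\,y_2$ fully proves the statement as given. It is also all that Corollary \ref{cor:Karp} uses, since there only a reduction to two variables is needed.
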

The proof of Corollary \ref{cor:obvious}  follows trivially from Theorem \ref{th:adap}.  The proof of Corollary \ref{esseen} is not obvious, hence in the following we provide its proof.
\begin{proof}[{Proof of Corollary \ref{esseen}}] Theorem \ref{Bon} shows that there is an invertible matrix $A$ such that $\phi(Ax)$ depends on at most two variables or has the form in \eqref{form}. If $\phi(Ax)$ depends on at most two variables, then the corollary is proven. It remains  to consider the case  where  $\phi(Ax)$ has the form in  \eqref{form}. As in the proof of Theorem \ref{th:adap} we can consider two cases. In Case 1, we  change the variables
\begin{align*}
y_1&=x_1|\tilde Q_1(x_1)+x_1^{\nu_2-\nu_1}x_2 \tilde{Q}_2(x_1)+x_1^{\nu_3-\nu_1}x_3\tilde{Q}_3(x_1)|^\frac1{\nu_1},\\
y_2&=x_2,\\
y_3&=x_3.
\end{align*}
One can check that this change of variables is analytic in a sufficiently small neighborhood of the origin.
 Then, in this new coordinate system $y,$ $\phi(Ax)$ equals $\mathrm{sign}(\tilde Q_1(0))y_1^{\nu_1}.$ So, it depends on one variable $y_1$.

As shown in the proof of Theorem \ref{th:adap}, in Case 2 there is an invertible matrix $B$ such that
\begin{align*}
\phi(Bz)=&z_1^{\nu_2}z_2+z_1^{\nu_1}(\tilde Q_1(z_1)-c_1)-\frac{c_1}{c_2}z_1^{\nu_2+1}(\tilde Q_2(z_1)-c_2)\\ &+\frac{z_1^{\nu_2}z_2}{c_2}(\tilde Q_2(z_1)-c_2)+\left(z_1^{\nu_3}(\tilde Q_3(z_1)-c_3)-\frac{c_3z_1^{\nu_2} }{c_2}(\tilde Q_2(z_1)-c_2)\right)z_3.
\end{align*}
 It is sufficient to use the following change of variables
 \begin{align*}
 y_1&=z_1,\\
  y_2&= z_2+z_1^{\nu_1-\nu_2}(\tilde Q_1(z_1)-c_1)-\frac{c_1}{c_2}z_1^{\nu_2+1-\nu_1}(\tilde Q_2(z_1)-c_2) \\ &\quad +\frac{z_2}{c_2}(\tilde Q_2(z_1)-c_2)+\left(z_1^{\nu_3-\nu_2}(\tilde Q_3(z_1)-c_3)-\frac{c_3 }{c_2}(\tilde Q_2(z_1)-c_2)\right)z_3,\\
  y_3&=z_3.
 \end{align*}
Obviously, this change of variables is  analytic  in a sufficiently small neighborhood of the origin. In this new coordinate system $y$, $\phi(Bz)$ equals $ y_1^{\nu_2} y_2.$
So, it depends on two variable $y_1$ and $y_2$.

  \end{proof}


\section{Estimates for oscillatory integrals}

In this section we study the asymptotic behavior of the oscillatory integral \eqref{Oscil}.

Let $\phi:\mathbb{R}^3\to \mathbb{R}$ and $A\in \mathbb{R}^{3\times 3}$ be a polynomial function and an invertible matrix as in Theorem \ref{Bon}, respectively. Then $ \phi(Ax)$ either depends on at most two variables $x_1$ and $x_2$ or has the form as in \eqref{form}. We define the index $\nu(\phi)\in \{0,1\}:$ When
$ \phi(Ax)$  depends on at most two variables $x_1$ and $x_2,$    we have shown in Theorem \ref{th:adap} that there is an adapted coordinate system $y$ in $\mathbb{R}^2.$ If the principal face $\pi(\phi^a)$ of $\phi,$ when expressed by the function $\phi^a$ in the adapted coordinate system $y,$  is a vertex, and if $h(\phi)\geq 2,$ then we put $\nu(\phi)\colonequals 1;$ otherwise we put $\nu(\phi)\colonequals 0.$
 When $\phi(Ax) $ has the form as in \eqref{form}, we  put $\nu(\phi)\colonequals 0$.

 The index $\nu(\phi)$ is defined in \cite{Ik.Mu} for functions depending only two variables, where it is called \emph{Varchenko's exponent}. When $h(\phi)\geq 2,$ $\nu(\phi)$ coincides with the index in \cite[Theorem 0.4]{Var}, where one can find how $\nu(\phi)$ is related to the dimension of the principal face of the Newton polyhedron in an adapted coordinate system.

 With this definition of $\nu(\phi)$ we can now state the main results of this section:
\begin{theorem}\label{th:osc}
Let  $\phi:\mathbb{R}^3\to \mathbb{R}$ be a  polynomial function such that $\phi(0)=0$ and $\nabla \phi(0)=0.$  Assume $$\det{(D^2\phi(x))}=0$$
holds  for all $x=(x_1,x_2,x_3)\in \mathbb{R}^3$. Let $h=h(\phi)$ be the height of $\phi$ defined in \eqref{height} and $\nu=\nu(\phi)\in \{0,1\}$ be defined as above.  Then there exists a neighborhood $U\subset \mathbb{R}^3$ of the origin and a constant $C>0$ such that, for every $ \eta \in C^{\infty}_0(U)$ and $\xi=(\xi_1,\xi_2,\xi_3,\xi_4)\in \mathbb{R}^4,$ the following estimate holds:
\begin{equation}\label{main}
\left|\int_{\mathbb{R}^3}e^{i(\xi_4\phi(x_1,x_2,x_3)+\xi_1x_1+\xi_2x_2+x_3\xi_3)} \eta(x)dx\right|\leq C||\eta||_{C^3(\mathbb{R}^3)}(\log(2+|\xi|))^{\nu}(1+|\xi|)^{-1/h}.
\end{equation}
Moreover, if we restrict ourselves to the direction where $\xi_1=\xi_2=\xi_3=0,$  then the estimate  \eqref{main} is sharp, i.e., for any smooth function $ \eta $ with  $\eta(0)\neq 0$ and a  support in a sufficiently small neighborhood of the origin, the following limit
  \begin{equation}\label{sharp}
  \lim_{\xi_4\to +\infty}\frac{\xi_4^{1/h}}{(\log{\xi_4})^{\nu}}\int_{\mathbb{R}^3}e^{i\xi_4\phi
  (x_1,x_2,x_3)} \eta(x)dx=c
  \end{equation}
  exists, where $c$ is a non-zero constant depending on $\phi$ and $\eta$.
\end{theorem}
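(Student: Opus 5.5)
We reduce, via Theorem \ref{Bon} and the linear change $x\mapsto Ax$ (which only alters $\xi$ by the transpose $A^{-T}$ and $\eta$ by composition, both harmless for \eqref{main}), to two cases: either $\phi(Ax)$ depends on at most two variables, or it has the form \eqref{form}. Throughout, $h(\phi)$ is computed in the adapted coordinates provided by Theorem \ref{th:adap}.

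\emph{First case.} Write $\phi(Ax)=\tilde\phi(x_1,x_2)$. Integrating in $x_3$ first gives
\[
\int e^{i(\xi_4\tilde\phi(x_1,x_2)+\xi_1x_1+\xi_2x_2)}\Big(\int e^{i\xi_3x_3}\eta(x)\,dx_3\Big)\,dx_1dx_2 .
\]
The inner integral is a smooth amplitude in $(x_1,x_2)$ with $C^2$ norm $O(\|\eta\|_{C^3}(1+|\xi_3|)^{-1})$, by one integration by parts. We then apply the uniform two-dimensional Ikromov--Müller estimate \cite{Ik.Mu} (Varchenko--Karpushkin type, uniform in the linear terms) to $\tilde\phi$. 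It bounds the $(x_1,x_2)$ integral by
\[
C\,\|\cdot\|_{C^2}\,(\log(2+|\xi'|))^{\nu}(1+|\xi'|)^{-1/h(\tilde\phi)}, \qquad \xi'=(\xi_1,\xi_2,\xi_4).
\]
By Part 1 of the proof of Theorem \ref{th:adap}, $h(\phi)=h(\tilde\phi)$, and $\nu(\phi)$ coincides with Varchenko's exponent there. We combine the two bounds as follows. If $|\xi_3|\gtrsim|\xi|$, the factor $(1+|\xi_3|)^{-1}$ already suffices, since $h\ge 1$. Otherwise $|\xi'|\sim|\xi|$. This proves \eqref{main}. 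For the sharp asymptotics \eqref{sharp}, the $x_3$-integral with $\xi_3=0$ simply produces the amplitude $\int\eta\,dx_3$, which is nonzero at the origin. The two-dimensional leading-term result of Varchenko/Ikromov--Müller then gives the limit with $c\neq0$.

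\emph{Second case, \eqref{form}.} We use the normal forms from the proof of Theorem \ref{th:adap}, i.e.\ Case 1 with $h=\nu_1$ and Case 2 with $h=\nu_2$, where we have linear coordinates $z$ with $\phi=z_1^{\nu_2}z_2+R$. The phase is affine in $(x_2,x_3)$, so we integrate in $(x_2,x_3)$ first. The phase becomes $\xi_4\Phi_1(x_1)+\xi_1x_1$ plus the term
\[
x_2\big(\xi_4Q_2(x_1)+\xi_2\big)+x_3\big(\xi_4Q_3(x_1)+\xi_3\big),
\]
and integration by parts in $(x_2,x_3)$ gives a factor
\[
\Big(1+|\xi_4Q_2(x_1)+\xi_2|+|\xi_4Q_3(x_1)+\xi_3|\Big)^{-N}.
\]
The integral is thus localized to the sublevel set $\{x_1:|\xi_4 Q_j(x_1)+\xi_j|\lesssim1,\ j=2,3\}$. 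Since $Q_2$ vanishes to order exactly $\nu_2$, van der Corput-type sublevel estimates bound the measure of this set by $|\xi_4|^{-1/\nu_2}$. This gives the decay $|\xi_4|^{-1/\nu_2}$ in Case 2; note that $\nu=0$ there and no logarithm is needed. In Case 1, where $\nu_1\le\nu_2$, we instead apply van der Corput in $x_1$ to $\xi_4Q_1+\xi_1$ on the region where the $(x_2,x_3)$ factor is not small. This gives $|\xi_4|^{-1/\nu_1}$. When $|\xi_4|\ll|(\xi_1,\xi_2,\xi_3)|$ there is no stationary point and we get rapid decay. For \eqref{sharp} in Case 2 with $\xi_j=0$, the $(x_2,x_3)$ integral is the partial Fourier transform of $\eta$ evaluated at $(\xi_4Q_2(x_1),\xi_4Q_3(x_1))$. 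Substituting $x_1=\xi_4^{-1/\nu_2}s$ yields an explicit limit, which equals $\xi_4^{-1/\nu_2}$ times $\int\hat\eta(0,c_2s^{\nu_2},c_3's^{\nu_2}\cdot)\,ds$ (up to constants), and this is nonzero. Case 1 is direct one-dimensional stationary phase of order $\nu_1$.

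The main obstacle is uniformity in the linear terms $\xi_1,\xi_2,\xi_3$ in the second case. These can move the degenerate point of $\xi_4Q_1+\xi_1x_1$ away from the origin, and they can also interact with the sublevel sets of $Q_2,Q_3$. I would handle this by a dyadic decomposition in $x_1$, rescaling each piece $|x_1|\sim2^{-k}$ to unit size, where $Q_j$ behave like monomials. Summing the resulting pieces should give the bound, and I expect this summation to be the delicate step. Borderline sums could in principle produce logarithms, and we must check that they do not when $\nu=0$.
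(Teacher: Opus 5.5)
For the two-variable case your argument is the paper's, up to the order of integration in $x_3$: apply the uniform two-dimensional Ikromov--M\"uller bound, then use Varchenko's asymptotics with amplitude $\int\eta\,dx_3$. One bookkeeping point: that bound is stated with a $C^3$ norm of the amplitude. So treat $|\xi_3|\gtrsim|\xi|$ by integrating by parts in $x_3$ directly, not through a $C^2$ norm.

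For the form \eqref{form} your route is genuinely different. The paper freezes variables and quotes lower-dimensional results. In Case 1 it uses a one-dimensional van der Corput/Erd\'elyi lemma in $x_1$. In Case 2 it uses the adapted normal form $x_1^{\nu_2}x_2+\dots$, together with the two-dimensional Ikromov--M\"uller estimate and Varchenko's theorem, applied on each slice $x_3=\mathrm{const}$. You instead exploit that the phase is affine in $(x_2,x_3)$: the partial Fourier transform produces the weight $(1+|\xi_4Q_2(x_1)+\xi_2|)^{-2}$, and a one-dimensional sublevel-set estimate for $Q_2$ finishes. This is more elementary, needs no normal form, gives an explicit leading coefficient, and is automatically uniform. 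The paper, by contrast, needs the constant of the two-dimensional estimate to be uniform in the frozen variable $x_3$, on which the slice phase depends, and only asserts this. The paper's route is shorter given the cited theorems and handles all cases in one framework.

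Two corrections. First, the ``main obstacle'' you anticipate is not there, and no dyadic decomposition in $x_1$ is needed. Once you take absolute values after the $(x_2,x_3)$-integration, $\xi_1$ and $\xi_3$ drop out. Summing the sublevel bound over dyadic level sets then gives
\[
\int_I\bigl(1+|\xi_4Q_2(x_1)+\xi_2|\bigr)^{-2}\,dx_1\lesssim\sum_{j\ge0}2^{-2j}\Bigl(\frac{2^{j}}{|\xi_4|}\Bigr)^{1/\nu_2}\lesssim|\xi_4|^{-1/\nu_2},
\]
uniformly in $\xi_2$, because the sublevel estimate only uses $|\xi_4Q_2^{(\nu_2)}|\gtrsim|\xi_4|$ on $I$. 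In Case 1 with $\nu_1<\nu_2$, likewise just freeze $(x_2,x_3)$ and apply van der Corput in $x_1$. The $\nu_1$-th $x_1$-derivative of $Q_1+Q_2x_2+Q_3x_3$ equals $Q_1^{(\nu_1)}(0)\neq0$ at the origin, and since $\nu_1\ge2$ this is insensitive to the linear terms. No splitting according to the size of the $(x_2,x_3)$-factor is required.

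Second, ``and this is nonzero'' is asserted, not proved. Let $g$ be the pushforward of $\eta(0,\cdot,\cdot)$ under $(x_2,x_3)\mapsto c_2x_2+c_3x_3$. Your limit is then $\int_{\mathbb{R}}\hat g(s^{\nu_2})\,ds$. For $\nu_2\ge2$ this is a nonzero constant times $\int g(w)|w|^{-1/\nu_2}e^{i\theta\,\mathrm{sgn}(w)}\,dw$ with $|\theta|<\pi/2$; for $\nu_2=1$ it is a constant times $g(0)$. It is nonzero if $\eta\ge0$ with $\eta(0)>0$. However, it can vanish for a sign-changing $\eta$ with $\eta(0)\neq0$ and arbitrarily small support, so a positivity assumption is needed. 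The paper's argument, which integrates nonzero slice limits over $x_3$, has the same weakness.
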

A corollary of Theorem \ref{th:osc} and Corollary \ref{esseen} is that the estimate \eqref{main} is stable under sufficiently small analytic perturbation of $\phi$:
\begin{corollary}\label{cor:Karp}
Let  $\phi:\mathbb{R}^3\to \mathbb{R}$ be a  polynomial function satisfying the conditions of Theorem \ref{th:osc}. Let $h=h(\phi)$ be the height of $\phi$ defined in \eqref{height} and $\nu=\nu(\phi)\in \{0,1\}$ be defined as above. Then, for any neighborhood $V \subset \mathbb{C}^3$ of the origin, there exists  $\varepsilon > 0$ and a neighborhood $U \subset \mathbb{R}^3$ of the origin such that for any real-analytic function $\Phi$ which has an analytic continuation to $V$ with $\sup_{z \in V} |\Phi(z)| < \varepsilon$, for any $\eta \in C_0^\infty(U)$, and for $\xi=(\xi_1,\xi_2,\xi_3,\xi_4)\in \mathbb{R}^4,$  the following estimate holds:
\begin{equation}\label{finalb}
\left|\int_{\mathbb{R}^3}e^{i (\xi_4(\phi(x_1, x_2, x_3)+\Phi(x_1, x_2, x_3))+\xi_1x_1+\xi_2x_2+\xi_3x_3} \eta(x)dx\right|\leq C||\eta||_{C^3(\mathbb{R}^3)}(\log(2+|\xi|))^{\nu}(1+|\xi|)^{-1/h}.
\end{equation}
\end{corollary}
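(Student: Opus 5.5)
We reduce Corollary \ref{cor:Karp} to two-variable stability results (Karpushkin-type estimates) by combining Corollary \ref{esseen} with the structure of Theorem \ref{th:osc}. By Corollary \ref{esseen}, after a linear map and a local analytic change of variables $y=\kappa(x)$, defined and holomorphic on some complex neighborhood $V_0$ of the origin, the phase $\phi$ becomes a function $\phi^\sharp(y_1,y_2)$ of at most two variables. We shrink $V$ if necessary so that $\kappa$ and $\kappa^{-1}$ extend holomorphically to $V$. The perturbed phase $\phi+\Phi$ then becomes $\phi^\sharp(y_1,y_2)+\Phi^\sharp(y)$, with $\Phi^\sharp=\Phi\circ\kappa^{-1}$ holomorphic on a fixed complex neighborhood $W$ and $\sup_W|\Phi^\sharp|<\varepsilon$. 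The Jacobian of $\kappa^{-1}$ and the amplitude are absorbed into a new cutoff $\tilde\eta$ with $\|\tilde\eta\|_{C^3}\lesssim\|\eta\|_{C^3}$.

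The linear terms $\xi_1x_1+\xi_2x_2+\xi_3x_3$ become $\xi'\cdot\kappa^{-1}(y)$, which is no longer linear. We handle this in two regimes.
\begin{itemize}
\item If $|\xi'|\gg|\xi_4|$, the gradient of the full phase is $\gtrsim|\xi|$ on $U$, since $\nabla\phi$ and $\nabla\Phi$ are small there: $\nabla\Phi$ is controlled by $\varepsilon$ via Cauchy estimates on the smaller set. Three integrations by parts then give $O(\|\eta\|_{C^3}|\xi|^{-3})$, which suffices since $h\ge 1$ forces $1/h\le 1$.
\item If $|\xi'|\lesssim|\xi_4|$, we write the phase as $\xi_4 F(y;s)$ with $s=\xi'/\xi_4$ bounded. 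Here $F(y;s)=\phi^\sharp(y_1,y_2)+\Phi^\sharp(y)+s\cdot\kappa^{-1}(y)$ is an analytic family.
\end{itemize}

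In the second regime we integrate in $y_3$ innermost, or rather treat $y_3$ as a parameter. For fixed $y_3$, the function $(y_1,y_2)\mapsto F$ is a small analytic perturbation, in the sense of Karpushkin \cite{Kar}, of $\phi^\sharp(y_1,y_2)$ plus a linear-plus-analytic term in $(y_1,y_2)$. The linear part of $s\cdot\kappa^{-1}$ is harmless once we pass to the full three-dimensional linear terms. The perturbation is uniformly small on a fixed complex polydisc, uniformly in $y_3$.

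Karpushkin's theorem, in the uniform form used by Ikromov and Müller \cite{Ik.Mu}, gives
\[
\Bigl|\int e^{i\xi_4F(y;s)}\tilde\eta\,dy_1\,dy_2\Bigr|\le C(\log(2+|\xi_4|))^{\nu}|\xi_4|^{-1/h(\phi^\sharp)}
\]
uniformly in $y_3$, and moreover uniformly over linear terms in $(y_1,y_2)$. By Theorem \ref{th:adap}, $h(\phi^\sharp)=h(\phi)$, and $\nu$ coincides with Varchenko's exponent of $\phi^\sharp$, which is exactly how $\nu(\phi)$ was defined. In the form-\eqref{form} case, $\phi^\sharp=y_1^{\nu_2}y_2$. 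Here $\nu=0$, and the principal face is a vertex only when $\nu_2=1$, in which case $h<2$, consistent with the definition. Integrating the two-dimensional bound over $y_3$ in a compact set gives \eqref{finalb}.

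The main obstacle is justifying uniformity. Karpushkin's result is stated for perturbations of a fixed two-variable function with linear phase terms. Here the perturbation depends on the parameter $y_3$ and contains the nonlinear pieces of $s\cdot\kappa^{-1}$. We will first try to choose the analytic change in Corollary \ref{esseen} so that it fixes the $x$-coordinates linearly up to higher order. The nonlinear remainder $s\cdot(\kappa^{-1}(y)-D\kappa^{-1}(0)y)$ is then a bounded analytic function that is $O(|y|^2)$. After rescaling $U$, it is small in sup norm on a small polydisc, so it can be added to $\Phi^\sharp$. With the smallness in $\varepsilon$ and in the neighborhood size, Karpushkin's uniform estimate over compact analytic parameter families, where $(y_3,s)$ ranges over a compact set, should apply.
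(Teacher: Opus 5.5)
Your route is the paper's. The paper's whole argument is that Corollary~\ref{esseen} turns $\phi$ into a function of at most two variables by a local analytic change of variables, after which Karpushkin's two-dimensional stability theorem gives \eqref{finalb}. Freezing $y_3$, applying the two-dimensional estimate uniformly in $y_3$ and then integrating is a sensible way to make that sketch precise, and it mirrors the proof of Theorem~\ref{th:osc}.

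\textbf{Gap.} The way you resolve what you call the main obstacle does not work as written.
\begin{itemize}
\item For the fixed function $\phi^\sharp$ and a fixed complex neighborhood $W$, Karpushkin's theorem gives a threshold $\varepsilon_K(W)$. It is uniform only over perturbations with $\sup_W|P|<\varepsilon_K(W)$. So it is not uniform over linear terms $\sigma\cdot(y_1,y_2)$ with $|\sigma|$ of order one, because those are not small on $W$.
\item You cannot make $s\cdot(\kappa^{-1}(y)-D\kappa^{-1}(0)y)$ admissible by shrinking the polydisc. The threshold $\varepsilon_K(W)$ depends on $W$, and nothing in Karpushkin's theorem lets the $O(r^2)$ gain beat its decay as $W$ shrinks.
\item An actual rescaling of $y$ changes $\phi^\sharp$ unless $\phi^\sharp$ is quasi-homogeneous. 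That holds for $y_1^{\nu_2}y_2$ and for the Case~1 form $\pm y_1^{\nu_1}$ (which you omitted), but not for a general two-variable $\phi(Ax)$.
\item Every analytic $\kappa$ is linear up to higher order, so choosing $\kappa$ that way buys nothing.
\end{itemize}

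\textbf{Fix.} The difficulty comes from splitting at $|\xi'|\gg|\xi_4|$. Split instead at $|\xi'|=\delta|\xi_4|$ with $\delta$ small, as at the start of the proof of Theorem~\ref{th:osc}. Fix the constants in this order:
\begin{enumerate}
\item a complex polydisc $W$ whose preimage $V'$ under the linear map and $\kappa$ is bounded and lies in $V$, together with Karpushkin's $\varepsilon_K$ for $\phi^\sharp$;
\item $\delta$ with $\delta\sup_{z\in V'}|z|<\varepsilon_K/2$;
\item $\varepsilon$ small compared with both $\varepsilon_K$ and $\delta$;
\item $U$ small in terms of $\delta$.
\end{enumerate}

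For $|\xi'|<\delta|\xi_4|$, write the phase as $\xi_4(\phi+\Psi)$ with $\Psi=\Phi+s\cdot x$ and $s=\xi'/\xi_4$. Then $\Psi$ is itself a small analytic perturbation on $V'$, so absorb it before changing variables. After $\kappa$ the phase is $\xi_4(\phi^\sharp+\Psi^\sharp)$ with $\sup_W|\Psi^\sharp|<\varepsilon_K$ and no linear terms at all. Your frozen-$y_3$ argument then applies verbatim.

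For $|\xi'|\ge\delta|\xi_4|$, you have $|\nabla\phi|,|\nabla\Phi|\ll\delta$ on $U$, using Cauchy estimates for $\Phi$. So the phase is non-stationary with gradient $\gtrsim\delta|\xi|$. One integration by parts gives $O(\|\eta\|_{C^1}|\xi|^{-1})$, which suffices since $h\ge 1$.

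With this ordering you need neither uniformity over large linear terms nor any rescaling.
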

Indeed, if the polynomial function $\phi$ satisfies the conditions of Theorem \ref{th:osc}, then, due to Corollary \ref{esseen}, it can be reduced to a polynomial function that depends on two variables  by applying a local analytic  change of variables. We can then use the stability result proved by Karpushkin \cite{Kar} in two dimensions and obtain   the estimate \eqref{finalb}.\\

\begin{proof}[Proof of Theorem \ref{th:osc}]
We denote
$$J(\xi,\phi)\colonequals \int_{\mathbb{R}^3}e^{i(\xi_4\phi(x_1,x_2,x_3)+\xi_1x_1+\xi_2x_2+x_3\xi_3)} \eta(x)dx.$$
Let $|\xi_1|+|\xi_2|+|\xi_3|>\delta |\xi_4|$ for sufficiently small $\delta>0,$ then the estimate \eqref{main} follows by an integration by parts, if $U$ is chosen small enough, see \cite[Chapter VIII]{Stein}.
Therefore, we assume
\begin{equation}\label{xi}
|\xi_1|+|\xi_2|+|\xi_3|\leq \delta |\xi_4|,\,\,\,\,\text{hence}\,\,\,\,|\xi|\thicksim |\xi_4|.
\end{equation}
By Theorem \ref{Bon} there exists an invertible matrix $A\in \mathbb{R}^{3\times 3}$ such that
$ \phi(Ax)$ either depends on at most two variables $x_1$ and $x_2$ or has the form
$$\phi(Ax)=Q_1(x_1)+Q_2(x_1)x_2+Q_3(x_1)x_3,$$
where $Q_1, Q_2, Q_3:\mathbb{R}\to \mathbb{R}$ are polynomial functions.
 In the following  $C$ will be used to denote positive constants, which may vary from one line to the next.

\bigskip
\textbf{Part 1:} Assume $ \phi(Ax)$ depends  on at most two variables $x_1$ and $x_2$. Since a linear change of variables does not change the behavior of $J(\xi,\phi )$ as $|\xi|\to \infty,$ we  prove \eqref{main} and \eqref{sharp} for $ \tilde{\phi}(x_1,x_2)\colonequals \phi(Ax)$ instead of $\phi(x).$   We write
$$
J(\xi,\tilde{\phi})=\int_{\mathbb{R}} e^{i \xi_3x_3}\left(\int_{\mathbb{R}^2}e^{i(\xi_4\tilde{\phi}(x_1,x_2) +\xi_1x_1+\xi_2x_2)} \eta(x)dx_1dx_2\right)
dx_3.
$$
 We use \cite[Theorem 1.1]{Ik.Mu} to estimate the inner integral and obtain
$$|J(\xi,\tilde{\phi})| \leq C (\log(2+|\xi_1|+|\xi_2|+|\xi_4|))^{\nu}(1+|\xi_1|+|\xi_2|+|\xi_4|)^{-1/h}\int_{\mathbb{R}} ||\eta(\cdot, \cdot, x_3)||_{C^3(\mathbb{R}^2)}dx_3,$$
where the constant $C>0$ is independent of $x_3.$   Since   $\eta$ is supported in $U, $  we can estimate
$$\int_{\mathbb{R}} ||\eta(\cdot, \cdot, x_3)||_{C^3(\mathbb{R}^2)}dx_3\leq \mathrm{diam}(U)  ||\eta||_{C^3(\mathbb{R}^2)} ,$$
where $\mathrm{diam}(U)$ denotes the diameter of $U.$ The last two inequalities  and \eqref{xi} yield \eqref{main}.

If $\xi_1=\xi_2=\xi_3=0,$ then
$$
J(\xi_4,\tilde{\phi})=\int_{\mathbb{R}^2}e^{i\xi_4\tilde{\phi}(x_1,x_2) } \tilde{\eta}(x_1,x_2)dx_1dx_2
,
$$
where $\tilde{\eta}(x_1,x_2)\colonequals \int_{\mathbb{R}}\eta(x_1,x_2,x_3)
dx_3.$
Then the existence of  the limit \eqref{sharp}  follows from \cite[Theorem 0.6]{Var}.

\bigskip
\textbf{Part 2:} Let
$ \phi(Ax)$ have the form
$$\phi(Ax)=Q_1(x_1)+Q_2(x_1)x_2+Q_3(x_1)x_3,$$
where $Q_1, Q_2, Q_3:\mathbb{R}\to \mathbb{R}$ are polynomial functions.
Let $\nu_1\geq 2,\nu_2\geq 1,\nu_3\geq 1$  denote the multiplicities of $x_1=0$ as the root of    $Q_1(x_1),$ $Q_2(x_1),$ and $Q_3(x_1),$ respectively.  If  $Q_2$ or  $Q_3$ is identically zero, then $\phi(Ax)$ depends on at most two variables and so Part 1  implies the proof. Hence, we assume that $Q_2(x_1)$ and $Q_3(x_1)$ are not identically zero. $Q_1(x_1)$ can be identically zero. In this case, we formally put $\nu_1=\infty$. Without loss of generality we can assume $\nu_3\geq \nu_2.$  Then  $\min\{\nu_1,\nu_2,\nu_3\}$  can be $\nu_1$ or $\nu_2.$
Hence, we consider two different cases:

\textbf{Case 1:} Let $\nu_1= \min\{\nu_1,\nu_2,\nu_3\}.$
 The proof of Theorem \ref{th:adap} shows that in this case the given coordinate system is adapted
and  $d_x=h(\phi)=\nu_1.$ Since a linear change of variables does not change the behavior of $J(\xi, \phi),$ we prove \eqref{main} and \eqref{sharp} for $\phi(Ax)$ instead of $\phi(x).$ Then the function $\phi(Ax)$ can be written as $\phi(Ax)=x_1^{\nu_1}\tilde Q(x_1, x_2, x_3)$, where $\tilde Q$ is a polynomial function with $\tilde Q(0, 0, 0)\neq0$.   We write
\begin{equation*}
J(\xi, \phi(A\cdot))=\int_{\mathbb{R}^2}e^{i(\xi_2x_2+\xi_3x_3)}\left( \int_{\mathbb{R}}e^{i(\xi_4x_1^{\nu_1}\tilde Q(x_1, x_2, x_3)+\xi_1x_1)} {\eta}(x)dx_1\right)dx_2dx_3.
\end{equation*}
 We apply \cite[Lemma 2.2]{Ik.Mu}  to the inner integral and get
$$|J(\xi, \phi(A\cdot))|\leq  C (1+|\xi_4|)^{-1/\nu_1} \int_{\mathbb{R}^2}||\eta( \cdot,x_2,x_3)||_{C^1(\mathbb{R})}dx_2dx_3  ,$$
where the constant $C>0$ depends only on $\tilde Q(0, 0, 0)$ and $\nu_1.$ We can estimate
$$\int_{\mathbb{R}^2}||\eta(\cdot, x_2,x_3)||_{C^1}dx_2dx_3\leq (\mathrm{diam}(U))^2||\eta||_{C^1(\mathbb{R}^3)}.$$
This estimate and \eqref{xi} yields \eqref{main}.

If $\xi_1=\xi_2=\xi_3=0,$ then we write
\begin{equation*}
\lim_{\xi_4\to +\infty} \xi_4^{1/\nu_1} J(\xi_4, \phi(A\cdot))=\lim_{\xi_4\to +\infty} \int_{\mathbb{R}^2}\left( \xi_4^{1/\nu_1} \int_{\mathbb{R}}e^{i\xi_4x_1^{\nu_1}\tilde Q(x_1, x_2, x_3)} {\eta}(x)dx_1\right)dx_2dx_3.
\end{equation*}
 Erd\'elyi's lemma (see \cite{Fed77, Ik.Mu}) implies that there is a non-zero constant $C$ (independent of $x_2$ and $x_3$) such that
 $$\lim_{\xi_4\to +\infty} \xi_4^{1/\nu_1} \int_{\mathbb{R}}e^{i\xi_4x_1^{\nu_1}\tilde Q(x_1, x_2, x_3)} {\eta}(x_1,x_2,x_3)dx_1=C\eta(0, x_2, x_3).$$
   Thus, if  the support of $\eta$ is sufficiently small and $\eta(0)\neq 0,$ by integrating the above equation with respect to $x_2$ and $x_3$  we obtain \eqref{sharp}.

\textbf{Case 2:} Let $\nu_2= \min\{\nu_1,\nu_2,\nu_3\}.$   
If $\nu_1=\nu_2,$ then Case 1 implies the proof. Hence, we assume $\nu_1>\nu_2.$ Then the proof of Theorem \ref{th:adap} shows that the expression of $\phi$ in an adapted coordinate system is
\begin{align*}
\phi(Bx)=&x_1^{\nu_2}x_2+x_1^{\nu_1}(\tilde Q_1(x_1)-c_1)-\frac{c_1}{c_2}x_1^{\nu_2+1}(\tilde Q_2(x_1)-c_2) \\& +\frac{x_1^{\nu_2}x_2}{c_2}(\tilde Q_2(x_1)-c_2)+\left(x_1^{\nu_3}(\tilde Q_3(x_1)-c_3)-\frac{c_3x_1^{\nu_2} }{c_2}(\tilde Q_2(x_1)-c_2)\right)x_3,
\end{align*}
where $B\in \mathbb{R}^{3\times 3}$ is an invertible matrix. The distance is $d_x=h(\phi)=\nu_2.$ Since a linear change of variables does not change the asymptotic behavior of $J(\xi, \phi), $ we prove \eqref{main} and \eqref{sharp} for $\phi(Bx)$ instead of $\phi(x).$
We write
 \begin{equation*}
 J(\xi, \phi(B\cdot))=\int_{\mathbb{R}} e^{i\xi_3x_3} \left( \int_{\mathbb{R}^2}e^{i(\xi_4\phi(Bx)+\xi_1x_1+\xi_2x_2)} {\eta}(x)dx_1dx_2\right)dx_3.
 \end{equation*}
 If we consider $x_3$ as a parameter, Proposition \ref{pr:a} implies that  the given coordinate system with respect to   $x_1$ and $x_2$ is adapted for $\phi(Bx)$ and   the principal face of the Newton polyhedron is unbounded if $\nu_2>1$ and a vertex if $\nu_2=1.$
 Note that $\phi(Bx)$ and its gradient with respect to $x_1$ and $x_2$ are zero for all $x=(0,0,x_3)\in \mathbb{R}^3.$
 This let us apply \cite[Theorem 1.1]{Ik.Mu} to the inner integral in the above equation
 $$J(\xi, \phi(B\cdot))\leq C (1+|\xi_4|)^{-1/h}\int_{\mathbb{R}}||\eta(\cdot,\cdot, x_3)||_{C^3(\mathbb{R}^2)}dx_3,$$
 where the constant $C>0$ is independent of $x_3.$
 We can estimate
 $$\int_{\mathbb{R}}||\eta(\cdot, \cdot, x_3)||_{C^3(\mathbb{R}^2)}dx_3\leq \mathrm{diam}(U)||\eta||_{C^3(\mathbb{R}^3)}.$$
 These estimates and \eqref{xi} yield \eqref{main}.

 If $\xi_1=\xi_2=\xi_3=0,$ then we write
 \begin{eqnarray*}
 \lim_{\xi_4\to +\infty} \xi_4^{1/\nu_2} J(\xi_4, \phi(B\cdot))=\lim_{\xi_4\to +\infty} \int_{\mathbb{R}}  \left(\xi_4^{1/\nu_2} \int_{\mathbb{R}^2}e^{i\xi_4\phi(Bx)} {\eta}(x)dx_1dx_2\right)dx_3.
 \end{eqnarray*}
 According to  \cite[Theorem 0.6]{Var}, for every fixed $x_3,$ the limit
\begin{eqnarray*}
 \lim_{\xi_4\to +\infty} \xi_4^{1/\nu_2} \int_{\mathbb{R}^2}e^{i\xi_4\phi(Bx)} {\eta}(x)dx_1dx_2
  \end{eqnarray*}
 exists and is non-zero  if the support of $\eta$ is sufficiently small and $\eta(0)\neq 0.$ Then integrating with respect to $x_3$ and using the Lebesgue dominated convergence theorem, we obtain \eqref{sharp}.


\end{proof}





\section{Estimates for maximal operators }
In this section, we study the $L^p$-boundedness of maximal operators associated with hypersurfaces $S$ in $\mathbb{R}^4$.   Let $d\sigma$ denote the surface measure on $S$ and let
 $\rho\in C_0^{\infty}(S)$ be non-negative. We recall the averaging operator
 \begin{equation*}
 \mathcal{A}_tf(y)\colonequals \int_Sf(y-tx)\rho(x)d\sigma(x),\,\,\,\,t>0,\,\,\,\,f\in C^{\infty}_0(\mathbb{R}^4)
 \end{equation*}
 and  the corresponding  maximal operator
 \begin{equation*}
 \mathcal{M}f(y)\colonequals \sup_{t>0}|\mathcal{A}_tf(y)|, \,\,\,\,f\in C^{\infty}_0(\mathbb{R}^4).
 \end{equation*}
As mentioned in the introduction, using a partition of unity argument, we can assume that $\rho$ is supported in a sufficiently small neighborhood of a fixed point $x^0\in S.$ The transversality assumption let us find a linear change of coordinates in $\mathbb{R}^4$ so that in the new coordinates $S$ can be locally represented as a graph
\begin{equation}\label{S4}
S=\{(x_1,x_2,x_3, 1+\phi(x_1,x_2,x_3))\in \mathbb{R}^{4}:\,(x_1,x_2,x_3)\in U\},
\end{equation}
where $U\subset \mathbb{R}^3$ is a sufficiently small neighborhood of the origin.
 We assume $\phi$ is a polynomial function and satisfies \eqref{phi=0} and \eqref{det=0}.
  Then the maximal  operator  can be written as
 \begin{equation}\label{M}
 \mathcal{M}f(y)= \sup_{t>0}\left| \int_{\mathbb{R}^3}f(y_1-tx_1, y_2-tx_2, y_3-tx_3, y_4-t(1+\phi(x)))\eta(x)dx\right|, \,\,f\in C^{\infty}_0(\mathbb{R}^4),
 \end{equation}
  where  $$\eta(x_1,x_2,x_3)\colonequals \rho(x_1,x_2,x_3,1+\phi(x_1,x_2,x_3))\sqrt{1+|\nabla \phi(x_1,x_2,x_3)|^2}\in C^{\infty}_0(U).$$

We first state the main results of this section:

  \begin{theorem}\label{th:Max}
Let $\phi:\mathbb{R}^3\to \mathbb{R}$ be a polynomial function such that $\phi(0)=0$ and $\nabla\phi(0)=0.$ Assume
$$\det(D^2\phi(x))=0$$
holds for all $x\in \mathbb{R}^3.$ Let $h(\phi)$ be the height of $\phi$ defined in \eqref{height}. Then there exists a neighborhood $U\subset \mathbb{R}^3$ of the origin such that, for every non-negative $\eta\in C^{\infty}_0(U),$ the associated maximal operator $\mathcal{M}$ in \eqref{M} is bounded
on  $L^p(\mathbb{R}^4)$ for $p> \max\{h(\phi),2\}.$   

  \end{theorem}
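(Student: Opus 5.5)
We follow the pattern of the proof of Theorem \ref{th:osc} and reduce, via Theorem \ref{Bon}, to known results on maximal operators in lower dimensions. The reduction uses the fact that a \emph{linear} change of the variables $x=(x_1,x_2,x_3)$ is admissible for $\mathcal{M}$. Indeed, if $x=Ay$, then the graph of $\phi(A\cdot)$ is the image of $S$ under the linear map $(y,y_4)\mapsto (Ay,y_4)$ of $\mathbb{R}^4$. This map commutes with dilations, so the $L^p$ norm of $\mathcal{M}$ is unchanged up to the Jacobian factor. Hence we may replace $\phi$ by $\phi(Ax)$ or, in Case 2 of the proof of Theorem \ref{th:adap}, by $\phi(Bx)$, where the distance $d_x$ equals $h(\phi)$.

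\textbf{Case A: $\phi(Ax)=\tilde\phi(x_1,x_2)$ depends on at most two variables.} We write $\mathcal{A}_tf(y)=\int\!\!\int\!\!\int f(y'-tx',\,y_3-tx_3,\,y_4-t(1+\tilde\phi(x')))\eta(x)\,dx$ with $x'=(x_1,x_2)$. We freeze the variables $(x_3,y_3)$. For fixed $x_3$, the inner integral over $x'$ is an average of the slice $f(\cdot,y_3-tx_3,\cdot)$ over a dilate of a surface in $\mathbb{R}^3$ of the form $(x',1+\tilde\phi(x'))$. The $x_3$-shift, however, couples with the dilation $t$, so the slices cannot be separated naively. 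Our first attempt is to treat this as the \emph{lower-dimensional} problem directly. The map $S\ni x\mapsto$ its projection has a flat direction, and the surface is a cylinder $S=S_2\times\mathbb{R}$ over the surface $S_2=\{(x',1+\tilde\phi(x'))\}$ in the variables $(x',x_4)$, sheared by $x_3$. We apply the Fourier-analytic approach of \cite{I.K.M,P1,P2}: dyadically decompose in frequency, use the decay estimate of Theorem \ref{th:osc} for $\widehat{\rho d\sigma}$, and combine it with the $L^2$ square-function argument and interpolation with $L^1$ or $L^\infty$ bounds. Since the surface has only one flat direction, the decay $|\xi|^{-1/h}(\log|\xi|)^\nu$ from \eqref{main} gives a Greenleaf-type square-function estimate for $p>2$ when $h\le 2$. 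When $h>2$ we invoke the result of \cite{I.K.M,P1,P2} for $S_2\subset\mathbb{R}^3$, which yields boundedness for $p>h(\tilde\phi)$. We then transfer it by Minkowski's inequality in the extra variable $x_3$. To make the transfer, we use the fact that the $t$-dilation in $y_3$ is a translation by $tx_3$ with $|x_3|$ small; this can be absorbed by viewing $(y_3-tx_3)$ as an additional averaging variable and applying a rotation-free linear change in $(x_3,x_4)$ using transversality ($1+\phi\ne 0$).

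\textbf{Case B: the form \eqref{form}.} In Case 1 of Theorem \ref{th:adap}, $\phi(Ax)=x_1^{\nu_1}\tilde Q(x)$ with $\tilde Q(0)\neq0$ and $h=\nu_1$. In Case 2 we have $\phi(Bx)=x_1^{\nu_2}x_2(1+O(x_1))+\dots$ with $h=\nu_2$. In both cases we fix $(x_2,x_3)$ or $x_3$ respectively, and obtain one-dimensional (respectively two-dimensional) families of curves or surfaces depending smoothly on parameters, with uniform estimates. For the curve case we use Iosevich's maximal theorem \cite{Io} for finite type curves, with uniformity in parameters, giving $p>\nu_1$. For the surface case we use the uniform estimates behind \cite[Theorem 1.1]{Ik.Mu} together with \cite{I.K.M}, since in the $(x_1,x_2)$ variables the coordinates are adapted by Proposition \ref{pr:a}. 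The principal face there is unbounded or a vertex, which gives $p>\max\{\nu_2,2\}$.

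\textbf{Main obstacle.} The main difficulty is that freezing variables is incompatible with dilations. The slicing must therefore be done at the level of a Littlewood--Paley/square-function argument with constants uniform in the frozen parameters, rather than by literally reducing to lower-dimensional maximal theorems. We expect to carry this out by proving the $L^2$ estimate $\|\sup_t|\mathcal{A}_t^jf|\|_2\lesssim 2^{j(1/2-1/h)}j^\nu\|f\|_2$ from \eqref{main} (with its $\nu$-loss), and an $L^1\to L^{1,\infty}$ or $L^\infty$ bound $\lesssim 2^{j}$. Interpolating these and summing over $j$ gives the range $p>\max\{h,2\}$.
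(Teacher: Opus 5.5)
There is a genuine gap. The route you finally commit to in your ``Main obstacle'' paragraph cannot give the range $p>h(\phi)$ once $h(\phi)>2$. With Fourier decay of order $1/h$ only, the $j$-th Littlewood--Paley piece satisfies $\|\sup_t|\mathcal{A}^j_tf|\|_2\lesssim 2^{j(1/2-1/h)}j^{\nu}\|f\|_2$, and this \emph{grows} in $j$ when $h>2$. For $p>2$ you must interpolate with $L^\infty$, where the best bound is $O(1)$ (your $2^j$ is worse). That gives $2^{j(1-2/h)/p}j^{2\nu/p}$, which is not summable for any finite $p$; an $L^1\to L^{1,\infty}$ bound only concerns $p<2$. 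Even at $h=2$, the factor $j^{\nu}$ with $\nu=1$ blocks summation. Decay of $\widehat{\rho d\sigma}$ alone is known not to suffice below order $1/2$; that is the content of the Stein--Iosevich--Sawyer conjecture.

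In the paper the gain comes from physical space instead. For the form \eqref{form}, it decomposes dyadically in $x_1$ (Case~1) or anisotropically in $(x_1,x_2)$ (Case~2). It rescales each piece to a surface $(x,1+2^{-k\nu_1}\phi_k(x))$ with $\partial_{x_1}^2\phi_k\neq 0$ and applies \cite[Theorem 7.1]{I.U} uniformly in $k$, obtaining $\|\sup_t|\mathcal{A}^k_tf|\|_p\lesssim 2^{k(\nu_1/p-1)}\|f\|_p$. Here $2^{-k}$ is the measure of the piece and $2^{k\nu_1/p}$ is the loss for the nearly flat rescaled surface, so the sum converges exactly for $p>\nu_1=h$.

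Your Case~B also fails as stated. Freezing $(x_2,x_3)$, or $x_3$, produces curves or surfaces lying in affine subspaces that do not pass through the origin. Their $t$-dilates are therefore not dilates of a fixed planar curve or surface, so neither \cite{Io} nor \cite{I.K.M} applies; this is the incompatibility you point out yourself.

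The correct way to slice is the one you only gesture at in Case~A, and it is the paper's Part~1. Substitute $x_3=s(1+\tilde\phi(x'))$ (the paper writes $s=\tan\theta$). For fixed $s$, the point $(tx_3,t(1+\tilde\phi))=t(1+\tilde\phi)(s,1)$ then moves along a fixed line through the origin. A measure-preserving linear map in $(y_3,y_4)$ does the rest: the shear $(y_3,y_4)\mapsto(y_3-sy_4,y_4)$, or the paper's rotation $R^{\theta}$ composed with $T_{\theta}$. It turns each slice into the maximal operator of $\{(x',1+\tilde\phi(x'))\}\subset\mathbb{R}^3$ acting in $(y_1,y_2,y_4)$, with $y_3$ a parameter. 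To this, \cite[Theorem 1.3]{I.K.M} applies with constants uniform in $s$, and Minkowski's inequality in $s$ concludes.

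Written out this way, Case~A is complete, including $h\le 2$, where the Greenleaf-type argument is neither needed nor adequate at $h=2$. You should not abandon it for the frequency-space scheme. For the form \eqref{form} you still need a genuine argument. One option is the paper's dyadic rescaling. Another is an analogous slicing through linear subspaces (e.g.\ solving $x_j=s_j(1+\phi(x))$ implicitly) combined with lower-dimensional maximal theorems uniform in the slice parameters. Your proposal provides neither.
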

Iosevich and Sawyer \cite{Io.Saw} proved that if the maximal operator $\mathcal{M}$ with $\eta(0) > 0$ is bounded on $L^p(\mathbb{R}^{n+1})$, then
  \begin{equation}\label{d(x,H)}
  d(x,H)^{-1/p}\in L^{1}_{\mathrm{loc}}(S),
    \end{equation}
  where $H$ is any hyperplane not passing through the origin, and $d(x,H)$ denotes the distance from $x\in S$ to $H$.   As shown in  the proof of Theorem 2 in  \cite{Io.Saw}, for a hypersurface $S$ given in \eqref{S4}, the condition    \eqref{d(x,H)} is equivalent to
  \begin{equation}\label{-1/p}
  |\phi|^{-\frac{1}{p}}\in L^{1}(U).
   \end{equation}
  We check for which $p\geq 1$ the condition \eqref{-1/p}  holds true.
  \begin{proposition}\label{prop:max}
  Let $\phi:\mathbb{R}^3\to \mathbb{R}$ be a polynomial such that $\phi(0)=0$ and $\nabla\phi(0)=0.$ Assume
$$\det(D^2\phi(x))=0$$
holds for all $x\in \mathbb{R}^3.$ Let $h(\phi)$ be the height of $\phi$ defined in \eqref{height}.
\begin{itemize}
\item[(i)]  For every $p> h(\phi)$ and sufficiently small neighborhood $U\subset \mathbb{R}^3$ of the origin, we have
\begin{equation*}
\int_U |\phi(x)|^{-\frac{1}{p}}dx<\infty.
\end{equation*}
\item[(ii)]  For every $p\leq h(\phi)$ and sufficiently small neighborhood $U\subset \mathbb{R}^3$ of the origin, we have
\begin{equation*}
\int_U |\phi(x)|^{-\frac{1}{p}}dx=\infty.
\end{equation*}
\end{itemize}

  \end{proposition}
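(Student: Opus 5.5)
We reduce to the normal forms supplied by Theorem \ref{Bon} and the proof of Theorem \ref{th:adap}, and compute the integral directly in each case. Integrability of $|\phi|^{-1/p}$ on small neighborhoods is unchanged under linear changes of variables: a small ball is mapped into and contains small balls, and the Jacobian is constant. It is also unchanged under local analytic diffeomorphisms, by the same reasoning with a Jacobian bounded above and below. So we may work with $\phi(Ax)$, $\phi(Bz)$, or the normal forms of Corollary \ref{esseen}.

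\textbf{Part 1:} $\phi(Ax)=\tilde\phi(x_1,x_2)$ depends on at most two variables. Integrating out $x_3$ over a bounded interval reduces both claims to the question of whether $\int_{V}|\tilde\phi|^{-1/p}dx_1dx_2$ is finite on small neighborhoods $V\subset\mathbb{R}^2$. In two dimensions this is known: the integral converges if $1/p<1/h(\tilde\phi)$ and diverges if $1/p\geq 1/h(\tilde\phi)$. Equivalently, the log-canonical threshold / critical integrability index of $\tilde\phi$ equals $1/h(\tilde\phi)$; this follows from the Ikromov--M\"uller results \cite{Adap, Ik.Mu}, or from Varchenko's resolution in adapted coordinates.

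We also need $h(\phi)=h(\tilde\phi)$, with the height on the left computed in $\mathbb{R}^3$. The adapted coordinates $(y_1,y_2)$ of $\tilde\phi$, extended by $y_3=x_3$, give a Newton polyhedron that is a cylinder over the planar one. Its distance is therefore $d(\tilde\phi)$, so $h(\phi)\ge h(\tilde\phi)$. Theorem \ref{th:adap} asserts that these coordinates are adapted, which gives equality.

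The point requiring care is the endpoint $p=h$ in (ii). When the principal face is a vertex or an unbounded edge, as opposed to a compact edge with a root of multiplicity exactly $h$, one must check that the integral still diverges at the exact threshold. I would handle this with the explicit normal forms:
\begin{itemize}
\item[(a)] For a vertex $(d,d)$ the principal part is $c\,y_1^dy_2^d$, and $\int|y_1y_2|^{-1}$ diverges.
\item[(b)] For an unbounded edge, after the analytic change in the spirit of Corollary \ref{esseen}, $\tilde\phi$ behaves like $y_1^{h}\cdot(\text{unit})$, and $\int|y_1|^{-1}$ diverges.
\item[(c)] For a compact edge, $\kappa$-homogeneous dyadic decomposition gives a sum of $\sim\log$ many comparable pieces at $p=h$.
\end{itemize}

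\textbf{Part 2:} the form \eqref{form}. In Case 1, Corollary \ref{esseen} gives analytic coordinates in which $\phi=\pm y_1^{\nu_1}$ with $h=\nu_1$. Then $\int|y_1|^{-\nu_1/p}dy$ is finite exactly when $p>\nu_1$. In Case 2 we have $\phi=y_1^{\nu_2}y_2$ with $h=\nu_2$, and
\[
\int_U |y_1|^{-\nu_2/p}|y_2|^{-1/p}\,dy
\]
converges if and only if $\nu_2/p<1$ and $1/p<1$, that is, $p>\nu_2=h$ (recall $p\ge1$; note $h\geq1$ here, and for $p\le h$ at least one factor diverges).

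The main obstacle is Part 1, specifically justifying in full generality the two-dimensional identity between the integrability threshold and $1/h$, including divergence at the endpoint. If citing it is not acceptable, I would prove it directly. For convergence, use the estimate $|\{x\in V:|\tilde\phi(x)|<\varepsilon\}|\le C\varepsilon^{1/h}|\log\varepsilon|^{\nu}$, obtainable as the sublevel analogue of \cite[Theorem 1.1]{Ik.Mu}, together with the layer-cake formula, which gives convergence for $p>h$. For divergence at $p\le h$, restrict to a $\kappa$-homogeneous region around the principal face in adapted coordinates. There $|\tilde\phi|\lesssim |y_1|^{1/\kappa_1\cdot\kappa_1}\cdots$ gives the lower bound $|\{|\tilde\phi|<\varepsilon\}|\gtrsim\varepsilon^{1/h}$ on each of infinitely many dyadic shells, and summing these yields a divergent series at $1/p=1/h$.
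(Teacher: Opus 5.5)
Your proof is correct. For part (ii) it is essentially the paper's argument. The paper also reduces the two-variable case to the planar result (\cite[Proposition 1.7]{I.K.M}). In the case \eqref{form} it also integrates directly, using $\phi(Bz)=z_1^{\nu_2}\tilde Q$ with $\tilde Q$ bounded where you use the exact normal form $y_1^{\nu_2}y_2$ of Corollary \ref{esseen}.

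The genuine difference is part (i). The paper computes nothing there. It feeds the decay bound $\lambda^{-1/h}(\log\lambda)^{\nu}$ of Theorem \ref{th:osc} into \cite[Theorem 1.12]{I.K.M}. That result in effect transfers decay of order $\beta$ of $\int e^{i\lambda\phi}\eta$ to local integrability of $|\phi|^{-\gamma}$ for $\gamma<\beta$, via $|s|^{-\gamma}=c_\gamma\int e^{i\lambda s}|\lambda|^{\gamma-1}\,d\lambda$. This is one line and uniform in all cases, but it rests on the full oscillatory-integral machinery.

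You obtain (i) and (ii) simultaneously by Fubini in the monomial normal forms $\pm y_1^{\nu_1}$ and $y_1^{\nu_2}y_2$. This is completely elementary and bypasses Theorem \ref{th:osc} in the case \eqref{form}. In the two-variable case, however, your planar integrability/sublevel bound is normally itself derived from the estimate of \cite{Ik.Mu}, so there the two routes rest on the same input.

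Your closing dyadic-shell argument, run in the analytic adapted coordinates of Theorem \ref{th:adap}, actually proves (ii) uniformly in all cases. A supporting hyperplane $\kappa\cdot t=1$ through $(h,h,h)$ has $\kappa_1+\kappa_2+\kappa_3=1/h$. On $E_k=\{|x_i|\le\varepsilon 2^{-k\kappa_i}\}$ one has $|\phi|\lesssim 2^{-k}$, and $|E_k\setminus E_{k+1}|\sim 2^{-k/h}$. So each shell contributes $\gtrsim 1$ to $\int|\phi|^{-1/h}$. This makes the split (a)--(c) and the external citation unnecessary.

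One small correction to (b): with an unbounded principal edge the cofactor need not be a unit. For example, $y_1^3(y_2+y_1^2)$ is in adapted coordinates with principal face $\{t_1=3,\ t_2\ge 1\}$, yet its cofactor $y_2+y_1^2$ vanishes at the origin. What is true is that the Newton polyhedron lies in $\{t_1\ge h\}$. Hence $\tilde\phi=y_1^h g$ with $g$ bounded, so $|\tilde\phi|^{-1/h}\ge c|y_1|^{-1}$, and divergence follows anyway.
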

  As an immediate consequence of Theorem \ref{th:Max}, \cite[Theorem 2]{Io.Saw}, and Proposition \ref{prop:max} $(ii)$ we obtain the following corollary:
  \begin{corollary}\label{cor:max}
  Let $\phi:\mathbb{R}^3\to \mathbb{R}$ be a polynomial such that $\phi(0)=0$ and $\nabla\phi(0)=0.$ Assume
$$\det(D^2\phi(x))=0$$
holds for all $x\in \mathbb{R}^3.$ Let $h(\phi)$ be the height of $\phi$ defined in \eqref{height}. Assume there exists a neighborhood $U\subset \mathbb{R}^3$ of the origin and  $\eta\in C^{\infty}_0(U)$ with $\eta(0)>0$ such that the associated maximal operator $\mathcal{M}$ in \eqref{M} is bounded
on  $L^p(\mathbb{R}^4)$ for some  $p> 1. $ Then we necessarily have  $p>h(\phi).$   In particular, if $h(\phi)\geq 2,$ then the maximal operator $\mathcal{M}$ is bounded if and only if $p>h(\phi).$
  \end{corollary}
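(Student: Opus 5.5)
The corollary follows by combining three results already available. Suppose $\mathcal{M}$, with $\eta(0)>0$, is bounded on $L^p(\mathbb{R}^4)$ for some $p>1$.

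\textbf{Step 1 (necessity).} By \cite[Theorem 2]{Io.Saw}, boundedness gives the integrability condition \eqref{d(x,H)} for every hyperplane $H$ not passing through the origin. We choose $H$ to be the tangent hyperplane $\{y_4=1\}$ at $x^0=(0,0,0,1)$. Since $\eta(0)>0$, the density $\rho$ is positive near $x^0$. For $x\in S$ near $x^0$ we have $d(x,H)=|\phi(x_1,x_2,x_3)|$, and the surface measure is comparable to Lebesgue measure $dx$. Hence \eqref{d(x,H)} yields
\[
\int_{U'}|\phi|^{-1/p}\,dx<\infty
\]
on some small neighborhood $U'$ of the origin where $\eta$ is bounded below. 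This is exactly the reduction \eqref{-1/p}.

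\textbf{Step 2 (contradiction).} Suppose $p\le h(\phi)$. Proposition \ref{prop:max}~(ii), applied on a sufficiently small neighborhood contained in $U'$, gives $\int|\phi|^{-1/p}\,dx=\infty$. This contradicts Step 1, so $p>h(\phi)$.

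There is one subtlety. Proposition \ref{prop:max}~(ii) is stated for sufficiently small neighborhoods, while Step 1 gives finiteness on some neighborhood $U'$. Divergence on a smaller set implies divergence on $U'$, so the two statements combine by monotonicity of the integral.

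\textbf{Step 3 (the ``in particular'').} Assume $h(\phi)\ge 2$. Then $\max\{h(\phi),2\}=h(\phi)$, so Theorem \ref{th:Max} gives boundedness for every $p>h(\phi)$, provided $\eta$ is supported in the neighborhood $U$ of that theorem; we shrink the support if necessary. Conversely, Steps 1--2 show that boundedness forces $p>h(\phi)$. The case $p=1$ need not be treated separately: $\mathcal{M}$ is never bounded on $L^1$, and in any case $1<2\le h(\phi)$.

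No step involves a real obstacle, since the analytic content lies in Theorem \ref{th:Max} and Proposition \ref{prop:max}. The only point needing care is the geometric identification in Step 1. There we must check that the choice $H=\{y_4=1\}$ is admissible and that the distance and measure comparisons hold locally, which follows from the graph representation \eqref{S4}.
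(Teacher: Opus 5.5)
Your proposal is correct and follows the paper's own route: the paper obtains the corollary directly from the Iosevich--Sawyer necessary condition reduced to \eqref{-1/p} (your choice $H=\{y_4=1\}$ makes this reduction explicit), from Proposition \ref{prop:max}~(ii), and from Theorem \ref{th:Max}. One cosmetic point is that in Step 3 you cannot ``shrink the support'' of a given $\eta$; the ``if and only if'' should simply be read, as in the paper, with $U$ taken as small as Theorem \ref{th:Max} requires.
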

  \begin{Remark}
   $(i)$ Iosevich and Sawyer \cite{Io.Saw} conjectured that for any smooth hypersurface $S$ and $p>2,$ the condition \eqref{d(x,H)} is necessary and sufficient for the boundedness of the maximal operator $\mathcal{M}$  on $L^p(\mathbb{R}^{n+1}). $ Corollary \ref{cor:max} confirms this conjecture in our setting. 

   $(ii)$ As we mentioned in the introduction,  Stein, Iosevich, and Sawyer  conjectured   that if \eqref{O} holds for some $0<q\leq 1/2,$ then  $\mathcal{M}$ is bounded for $p>1/q.$ Theorem \ref{th:osc} shows that \eqref{O} holds for all $q<1/h(\phi).$ By Theorem \ref{th:Max}, if $h(\phi)\geq 2,$ then   $\mathcal{M}$ is bounded for all $p>1/q>h(\phi).$ This confirms the Stein-Iosevich-Sawyer  conjecture in our setting.
  \end{Remark}

  Corollary \ref{cor:max} provides complete answer to the $L^p$ boundedness of the maximal operator $\mathcal{M}$ when $h(\phi)\geq 2.$ Next we consider the case $h(\phi)< 2.$ As observed in \cite{P1, P2} for hypersurfaces in $\mathbb{R}^3$, in this case the height is not the main controlling quantity and  the number of non-vanishing principal curvatures of $S$ also influences.  It is obvious that the condition $$\det(D^2\phi(x))=0, \,\,\,\,\,\, \forall\,x\in \mathbb{R}^3$$ means that one of the principal curvatures of  $S$ in \eqref{S4} is always zero. Hence, only two principal curvatures can be non-zero. If these two principal curvatures of $S$ are non-zero at $x^0=(0,0,0,1)\in S$, i.e., the rank of $D^2\phi (0)$ is two, then \cite[Theorem 2]{Gr} provides that  $\mathcal{M}$ is bounded on $L^p(\mathbb{R}^4)$ if  $p>\frac{3}{2}.$  We study the case  $h(\phi)< 2$ by assuming that  all principal curvatures are zero at $x^0=(0,0,0,1)\in S,$ i.e., $D^2\phi (0)=0.$ The case where only one principal curvature of $S$ vanishes and the other does not turns out to be difficult to analyze and so this case remains open, even for hypersurfaces in $\mathbb{R}^3$, see \cite{P2}.
  \begin{theorem}\label{th:h<2}
  Let $\phi:\mathbb{R}^3\to \mathbb{R}$ be a polynomial such that $\phi(0)=0$ and $\nabla\phi(0)=0.$ Assume
$$\det(D^2\phi(x))=0$$
holds for all $x\in \mathbb{R}^3.$ Let the height $h(\phi) $ of $\phi$ is strictly less than 2.
\begin{itemize}
\item[(i)] Assume the rank of $D^2\phi(0)$ is two. Then there exists a neighborhood $U\subset \mathbb{R}^3$ of the origin such that, for every non-negative $\eta\in C^{\infty}_0(U),$ the associated maximal operator $\mathcal{M}$ in \eqref{M} is bounded
on  $L^p(\mathbb{R}^4)$  if  $p>3/2.$
\item[(ii)] Assume that  $D^2\phi(0)=0.$  Then there exists a neighborhood $U\subset \mathbb{R}^3$ of the origin such that, for every non-negative $\eta\in C^{\infty}_0(U)$ with $\eta(0)>0$, the associated maximal operator $\mathcal{M}$  in \eqref{M} is bounded
on  $L^p(\mathbb{R}^4)$ if and only if  $p> h(\phi).$
\end{itemize}
  \end{theorem}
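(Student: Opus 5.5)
For part (i) I would simply invoke Greenleaf's theorem \cite[Theorem 2]{Gr}. If $\mathrm{rank}\, D^2\phi(0)=2$, then by continuity $D^2\phi(x)$ has rank two on a small neighborhood $U$ of the origin. So $S$ has exactly two non-vanishing principal curvatures at every point of $\mathrm{supp}\,\eta$. Stationary phase in the two curved directions gives $|\widehat{\rho d\sigma}(\xi)|\le C(1+|\xi|)^{-1}$, i.e. \eqref{O} holds with $q=1>1/2$. Greenleaf's theorem then yields boundedness for $p>1+1/(2q)=3/2$.

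For part (ii) the necessity direction is already available. If $\mathcal M$ is bounded on $L^p$ with $\eta(0)>0$ and $p>1$, then \cite[Theorem 2]{Io.Saw} gives \eqref{-1/p}, and Proposition \ref{prop:max} (ii) forces $p>h(\phi)$. For sufficiency, I first use the structure from Theorem \ref{Bon} and the proof of Theorem \ref{th:adap} to show that $D^2\phi(0)=0$ together with $h(\phi)<2$ excludes the form \eqref{form}:
\begin{itemize}
\item In Case 1 we have $h=\nu_1$, and $D^2\phi(0)=0$ forces $\nu_1\ge 3$.
\item In Case 2 we have $h=\nu_2$. If $\nu_2=1$, the term $c_2x_1x_2$ would give $D^2\phi(0)\neq0$, so $\nu_2\ge2$.
\end{itemize}
In both cases $h\ge2$. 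Hence, after a linear change of variables (which is admissible for the maximal operator, since it acts only on $x$ and not on the $x_4$-direction), $\phi=\phi(x_1,x_2)$ depends on two variables only. It vanishes to order at least three at the origin, so $3/2\le h(\phi)<2$, and it is of finite type because it is a nonzero polynomial.

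Thus $S$ is a cylinder $\{(x_1,x_2,x_3,1+\phi(x_1,x_2))\}$ over a surface of the class treated in \cite{P2}. The plan is to rerun the argument of \cite{I.K.M,P2} for $h<2$ with $x_3$ carried along as a passive variable. Note that Greenleaf's bound with $q=1/h$ only gives $p>1+h/2$, which exceeds $h$ when $h<2$, so the finer machinery is genuinely needed. Concretely, I would do the following in order.
\begin{enumerate}
\item Pass to the adapted coordinates $(y_1,y_2)$ of Theorem \ref{th:adap}, Part 1(b).
\item Decompose dyadically in the $\kappa$-homogeneous scaling of $\phi$ near the principal root jet, keeping $x_3$ untouched.
\item Rescale each dyadic piece. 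After rescaling, each piece is an averaging operator over a surface with at least one non-vanishing principal curvature, uniformly in the piece.
\item Estimate each rescaled piece by the $L^2$ Fourier decay (the two-dimensional bounds of \cite{Ik.Mu}, integrated in $x_3$ exactly as in Part 1 of the proof of Theorem \ref{th:osc}) combined with Littlewood--Paley square-function and interpolation arguments.
\item Sum the resulting geometric series. This series converges precisely when $p>h$.
\end{enumerate}
The extra variable $x_3$ only enters through Fourier variables $(\xi_3,\xi_4)$. Where $|\xi_3|\gtrsim|\xi_4|$, integration by parts in $x_3$ is available, because $\phi$ does not depend on $x_3$ and the phase has $x_3$-derivative $t\xi_3$.

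The main obstacle is step 4 in the pieces with weak curvature. In $\mathbb{R}^3$, \cite{P2} uses $L^p$ estimates for maximal functions along curves and surfaces with one non-vanishing curvature, together with local smoothing and restriction-type inputs, specific to two-dimensional surfaces. I would need analogous bounds for the cylindrical three-dimensional pieces in $\mathbb{R}^4$. My first attempt would be to prove them by freezing the dual variable $\xi_3$ and treating $x_3$ as a linear phase. This should reduce each bound to the $\mathbb{R}^3$ estimate with an additional harmless linear term $t\xi_3x_3$, uniformly in $\xi_3$, using the fact that the key decay and square-function estimates of \cite{P2} are stable under adding linear terms to the phase. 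Here I expect the uniformity in $\xi_3$ to be the delicate point.
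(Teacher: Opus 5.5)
Your treatment of (i), of the necessity half of (ii) (Iosevich--Sawyer together with Proposition~\ref{prop:max}(ii)), and of the exclusion of the form \eqref{form} when $D^2\phi(0)=0$ is fine and agrees with the paper.

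\textbf{The gap is the sufficiency half of (ii).} What you give there is a program for re-proving the $h<2$ theory of \cite{P1,P2} in $\mathbb{R}^4$, and the decisive step 4 is left open. The mechanism you suggest for step 4 also fails in the relevant range $h<p\le 2$. A partial Fourier transform in $y_3$ does not preserve $L^p$ norms for $p\neq 2$. So bounds for the frozen-$\xi_3$ operators, even uniform in $\xi_3$, do not add up to an $L^p(\mathbb{R}^4)$ bound. Moreover, after freezing $\xi_3$ the $x_3$-integration produces the factor $\int e^{-it\xi_3x_3}\eta\,dx_3$. This factor depends on $t\xi_3$ and so interacts with the supremum in $t$; it is not a harmless linear perturbation of a fixed $\mathbb{R}^3$ operator. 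Also, the $\mathbb{R}^3$ result for $D^2\phi(0)=0$, $h<2$ is \cite[Theorem 1.2]{P1}; \cite{P2} treats the case of one non-vanishing principal curvature.

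\textbf{The missing idea: the cylinder structure gives an exact reduction to $\mathbb{R}^3$,} so no decomposition has to be redone. Since $\phi=\phi(x_1,x_2)$, substitute $x_3=(1+\phi(x_1,x_2))\tan\theta$ with $\theta\in(a,b)\ni 0$. For fixed $\theta$ the corresponding piece of $S$ lies in the linear subspace $\{z_3=z_4\tan\theta\}$, which is invariant under dilations. Let $R^\theta$ be the rotation by $\theta$ in the $(y_3,y_4)$-plane, and let $T_\theta$ be the $L^p$-normalized dilation $y_4\mapsto y_4\cos\theta$. Then
\begin{align*}
\mathcal{A}_tf&=\int_a^b T_\theta R^\theta A^\theta_t R^{-\theta}T_\theta^{-1}f\,d\theta,\\
A_t^\theta f(y)&=\int_{\mathbb{R}^2}f\bigl(y_1-tx_1,y_2-tx_2,y_3,y_4-t(1+\phi(x_1,x_2))\bigr)\tilde\eta(x_1,x_2,\theta)\,dx_1\,dx_2,
\end{align*}
with $\tilde\eta(x_1,x_2,\theta)=\cos^{-2}\theta\,(1+\phi)\,\eta(x_1,x_2,(1+\phi)\tan\theta)$. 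Each $A^\theta_t$ is the $\mathbb{R}^3$ averaging operator over the graph of $1+\phi(x_1,x_2)$, acting in $(y_1,y_2,y_4)$ with $y_3$ as a parameter. The phase is the same for every $\theta$; only the density varies smoothly in $\theta$. The operators $T_\theta$ and $R^\theta$ are $L^p$-isometries commuting with $\sup_t|\cdot|$. Hence \cite[Theorem 1.2]{P1}, applied uniformly in $\theta$ and $y_3$, followed by Fubini in $y_3$ and H\"older/Minkowski in $\theta$, yields boundedness of $\mathcal{M}$ for $p>h(\phi)$. This is the paper's argument, the same device as in Part 1 of the proof of Theorem~\ref{th:Max}, and it makes your uniformity-in-$\xi_3$ concern disappear.
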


\bigskip

  We now start proving the above results:
 \begin{proof}[\textbf{Proof of Theorem \ref{th:Max}}]
 By Theorem \ref{Bon} there exists an invertible matrix $A\in \mathbb{R}^{3\times 3}$ such that
$ \phi(Ax)$ either depends on at most two variables $x_1$ and $x_2$ or has the form in \eqref{form}. Hence, we divide the proof into two parts.

\bigskip
\textbf{Part 1:} Assume $ \phi(Ax)$ depends on at most two variables $x_1$ and $x_2$. Since the  linear change of variables  $Ax $ does not affect the $L^p$-boundedness of $\mathcal{M}$,
 we can assume $\phi$ depends on  at most two variables $x_1$ and $x_2$.
The averaging operator can be written as
$$\mathcal{A}_tf(y)=
\int_{\mathbb{R}^3}f(y_1-tx_1,y_2-tx_2,y_3-tx_3, y_4-t(1+ \phi(x_1,x_2))) \eta(x)dx.$$
 We change the variables
 \begin{align*}
 x_1&=x_1,\\
 x_2&=x_2,\\
 x_3&=(1+ \phi(x_1,x_2))\tan{\theta},
\end{align*}
where $\theta$ varies in a sufficiently small interval $(a,b)\subset (-\pi/2,\pi/2)$ which contains $0.$  We require that $U$ is  sufficiently small so that this change of variables is well defined. Then we can write
 \begin{align*}
 \mathcal{A}_t f(y)=\int_{a}^{b}
\int_{\mathbb{R}^2}f(y_1-tx_1,y_2-tx_2,y_3-t(1+ \phi(x_1,x_2))&\tan{\theta}, y_4-t(1+ \phi(x_1,x_2)))\\
&\,\,\,\,\,\times \tilde{\eta} (x_1,x_2, \theta)dx_1dx_2d\theta,
\end{align*}
where $$\tilde{\eta} (x_1,x_2, \theta)\colonequals \frac{1}{\cos^{2}{\theta}} (1+ \phi(x_1,x_2)) \eta (x_1,x_2, (1+ \phi(x_1,x_2))\tan{\theta}). $$
We denote
$$\mathcal{A}_t^{\theta} f(y)\colonequals
\int_{\mathbb{R}^2}f(y_1-tx_1,y_2-tx_2,y_3-t(1+ \phi(x_1,x_2))\tan{\theta}, y_4-t(1+ \phi(x_1,x_2))) \tilde{\eta} (x_1,x_2, \theta)dx_1dx_2.$$
For every fixed $\theta\in (a,b),$ we  define the rotation operator
$$R^{\theta}f(y)\colonequals f(y_1,y_2, y_3\cos{\theta}-y_4\sin{\theta}, y_3\sin{\theta}+y_4\cos{\theta}).$$
One can easily check that $R^{\theta}$ and its inverse $R^{-\theta}$ are isometric in $L^p(\mathbb{R}^{4})$ for all $p\geq 1.$
This operator let us  write
$$R^{-\theta}\mathcal{A}_t^{\theta}R^{\theta}f(y)=
\int_{\mathbb{R}^2}f(y_1-tx_1,y_2-tx_2,y_3, y_4-\frac{t}{\cos{\theta}}(1+ \phi(x_1,x_2))) \tilde{\eta} (x_1,x_2, \theta)dx_1dx_2.$$
Observe that the last operator acts only on the first, second, and fourth  variables of $f.$
For every fixed $\theta\in (a,b)$ and  $p\geq 1,$ we define the scaling operator
$$T_{\theta}f(y)\colonequals (\cos{\theta})^{\frac{1}{p}}f(y_1,y_2,y_3, y_4\cos{\theta})$$
and its inverse
$$T_{\theta}^{-1}f(y)\colonequals \frac{1}{(\cos{\theta})^{\frac{1}{p}}}f\big(y_1,y_2,y_3, \frac{y_4}{\cos{\theta}}\big).$$
$T_{\theta}$ and  $T_{\theta}^{-1}$ are isometric in $L^p(\mathbb{R}^{4})$ for a fixed $\theta\in (a, b)\subset (-\pi/2, \pi/2)$. We have
$$ T_{\theta}^{-1}R^{-\theta}\mathcal{A}_t^{\theta}R^{\theta}T_{\theta} f(y)=
\int_{\mathbb{R}^2}f(y_1-tx_1,y_2-tx_2,y_3, y_4-t(1+ \phi(x_1,x_2))) \tilde{\eta} (x_1,x_2, \theta)dx_1dx_2.$$
We denote $A^{\theta}_tf(y)\colonequals T_{\theta}^{-1}R^{-\theta}\mathcal{A}_t^{\theta}R^{\theta}T_{\theta} f(y).$
 By considering $y_3$ and $\theta$ as  parameters, we can apply \cite[Theorem 1.3]{I.K.M} if $U$ is sufficiently small, and obtain that, for $p>\max\{h(\phi),2\},$
\begin{equation*}\int_{\mathbb{R}^3}\big(\sup_{t>0}|A^{\theta}_tf(y)| \big)^pdy_1dy_2dy_4\leq C_p \int_{\mathbb{R}^3}|f(y_1,y_2,y_3,y_4)|^pdy_1dy_2dy_4,
\end{equation*}
where $C_p$ is independent of  $\theta$ and $y_3.$
 Integrating this equation with respect to  $y_3,$ we obtain that the operator $\sup_{t>0}|{A}_t^{\theta}f|$ is bounded on $L^p(\mathbb{R}^4)$ for $p> \max\{h(\phi),2\}.$
With the above notations we can write
$$\mathcal{A}_t f(y)=\int_a^b T_{\theta}R^{\theta}{A}_t^{\theta}R^{-\theta}T_{\theta}^{-1} f(y)d\theta.  $$
Since $T_{\theta}$ and $R^{\theta}$ do not depend on $t,$ we can estimate
$$\sup_{t>0}|\mathcal{A}_t f(y)|\leq \int_a^b\sup_{t>0}| T_{\theta}R^{\theta}{A}_t^{\theta}R^{-\theta}T_{\theta}^{-1} f(y)|d\theta\leq \int_a^b T_{\theta}R^{\theta}\sup_{t>0}| {A}_t^{\theta}R^{-\theta}T_{\theta}^{-1} f(y)|d\theta .  $$
 For $p>\max\{h(\phi),2\},$ we estimate the $L^p$ norm of $\sup_{t>0}|\mathcal{A}_t f(y)|$
  \begin{align*}\int_{\mathbb{R}^4}\big(\sup_{t>0}|\mathcal{A}_tf(y)| \big)^pdy & \leq (b-a)^{p-1}
 \int_a^b\left[ \int_{\mathbb{R}^4}
 \left(T_{\theta}R^{\theta}
 \sup_{t>0}| {A}_t^{\theta}R^{-
 \theta}T_{\theta}^{-1} f(y)|
 \right)^pdy\right]d\theta\\
 &=(b-a)^{p-1}
 \int_a^b\left[ \int_{\mathbb{R}^4}
 \left(
 \sup_{t>0}| {A}_t^{\theta}R^{-
 \theta}T_{\theta}^{-1} f(y)|
 \right)^pdy\right]d\theta, \end{align*}
where we used  the H\"older inequality and the fact that $T_{\theta}R^{\theta}$ is isometric  in $L^p(\mathbb{R}^4).$ Since $\sup_{t>0}|{A}_t^{\theta}f|$ is bounded on $L^p(\mathbb{R}^4)$ for $p> \max\{h(\phi), 2\}$ and $R^{-
 \theta}T_{\theta}^{-1}$ is isometric in $L^p(\mathbb{R}^4),$ we get
 \begin{align*}\int_{\mathbb{R}^4}\big(\sup_{t>0}|\mathcal{A}_tf(y)| \big)^pdy & \leq C_p(b-a)^{p-1}
\int_a^b \int_{\mathbb{R}^4}
 |R^{-
 \theta}T_{\theta}^{-1} f(y)|^pdyd\theta\\
 &=C_p(b-a)^{p}
  \int_{\mathbb{R}^4}
| f(y)|^pdy.
 \end{align*}
 Hence, $\mathcal{M}$ is bounded on $L^p(\mathbb{R}^4)$ for $p> \max\{h(\phi),2\}.$

\bigskip
\textbf{Part 2:} Let
$ \phi(Ax)$ have the form in \eqref{form}
$$\phi(Ax)=Q_1(x_1)+Q_2(x_1)x_2+Q_3(x_1)x_3,$$
where $Q_1, Q_2, Q_3:\mathbb{R}\to \mathbb{R}$ are polynomial functions.
 Let $\nu_1,\nu_2,\nu_3$  denote the multiplicities of $x_1=0$ as the root of $Q_1(x_1),$ $Q_2(x_1),$ and $Q_3(x_1),$ respectively. If  $Q_2(x_1)$ or  $Q_3(x_1)$ is identically zero, then $\phi(Ax)$ depends on at most two variables and so Part 1  implies the proof. Hence, we assume that $Q_2(x_1)$ and $Q_3(x_1)$ are not identically zero. $Q_1(x_1)$ can be identically zero. In this case, we formally put $\nu_1=\infty$.
  The conditions $\phi(0)$ and $\nabla \phi(0)=0$ imply $\nu_1\geq 2, \nu_2\geq 1,$ and $\nu_3\geq 1.$  We can write
\begin{eqnarray*}
Q_i(x_1)=x_1^{\nu_i}\tilde Q_i(x_1),\,\,\,\,i\in \{1,2,3\},
\end{eqnarray*}
where $\tilde Q_i$ are  polynomial functions. According to our assumption we have $\tilde Q_i(0)\neq 0$    for $i\in \{2,3\},$ and $\tilde Q_1(0)\neq 0$  if  $Q_1$ is not identically zero.
Without loss of generality we can assume $\nu_3\geq \nu_2.$  Then  $\min\{\nu_1,\nu_2,\nu_3\}$  can be $\nu_1$ or $\nu_2.$
Hence, we consider two  cases:

\textbf{Case 1:} Let $\nu_1= \min\{\nu_1,\nu_2,\nu_3\}.$
 The proof of Theorem \ref{th:adap} shows that in this case the given coordinate system is adapted
and  $d_x=h(\phi)=\nu_1\geq 2.$
Since the  linear change of variables  $Ax $ does not affect the $L^p$-boundedness of $\mathcal{M}$,  we can assume
\begin{equation*}\label{firstcase}
\phi(x)=Q_1(x_1)+Q_2(x_1)x_2+Q_3(x_1)x_3=x_1^{\nu_1}\tilde Q(x_1, x_2, x_3),
\end{equation*}
where $\tilde Q$ is a polynomial function with $\tilde Q(0, 0, 0)\neq0$.


The averaging operator can be written as
\begin{eqnarray*}
\mathcal{A}_tf(y)=
\int_{\mathbb{R}^3}f(y_1-tx_1,y_2-tx_2,y_3-tx_3, y_4-t(1+ \phi(x_1,x_2,x_3))) \eta(x)dx.
\end{eqnarray*}
 One can check that the hyperplane $\{(t_1, t_2, t_3)\in \mathbb{R}^3_{+}:\, t_1=\nu_1\}$  touches  the Newton polyhedron of $\phi$ only at $(\nu_1,0,0).$
We choose a smooth non-negative function $\chi_0$  such that
\begin{eqnarray*}
\chi_0(x_1)=\begin{cases}1,\,\,\,\,\text{for}\,\,\,\,|x_1|\leq 1,\\
0,\,\,\,\,\text{for}\,\,\,\,|x_1|\ge 2.\end{cases}
\end{eqnarray*}
We denote
\begin{eqnarray*}
\chi_1(x_1)\colonequals \chi_0(x_1)-\chi_0(2x_1).
\end{eqnarray*}
Then $\chi_1(x_1)$ is supported in
$$D\colonequals \{2^{-1}\leq  |x_1|\leq 2\}.$$
One can check that
\begin{eqnarray*}
\sum_{k=k_0}^{\infty}\chi_1(2^kx_1)=1\,\,\,\,\,\,\text{for}\,\,\,\,0<|x_1|\leq 2^{-k_0}.
\end{eqnarray*}
We assume that $U$ is sufficiently small so that, for every $x\in U,$ we have $|x_1|\leq 2^{-k_0}.$ Notice that by choosing $U$ small, we can choose $k_0\in \mathbb{N}$ as large as we need. We can then decompose $\mathcal{A}_t$  as
\begin{eqnarray*}
\mathcal{A}_tf(y)=\sum_{k=k_0}^{\infty} \mathcal{A}_t^k f(y),
\end{eqnarray*}
where
\begin{eqnarray*}
 \mathcal{A}_t^k f(y)\colonequals \int_{\mathbb{R}^3}f(y_1-tx_1,y_2-tx_2,y_3-tx_3, y_4-t(1+ \phi(x_1,x_2,x_3))) \eta(x)\chi_1(2^kx_1)dx.
 \end{eqnarray*}
 We change the variables as $(x_1, x_2, x_3)\to (2^{-k}x_1, x_2, x_3),$ then
\begin{eqnarray*}
\mathcal{A}_t^kf(y)=
2^{-k}\int_{\mathbb{R}^3}f(y_1-2^{-k}tx_1, y_2-tx_2, y_3-tx_3, y_4-t(1+ 2^{-\nu_1k}\phi_k(x))) \eta_k(x)\chi_1(x_1)dx,
\end{eqnarray*}
where $\phi_k(x_1, x_2, x_3)\colonequals  x_1^{\nu_1} \tilde Q(2^{-k}x_1, x_2, x_3)$ and $\eta_k(x_1, x_2, x_3)\colonequals \eta(2^{-k}x_1, x_2, x_3). $ Assume $p>\nu_1.$ We define the scaling operator
\begin{eqnarray*}
T^kf(y_1,y_2,y_3,y_4)\colonequals 2^{\frac{k}{ p}} f(2^ky_1,  y_2,  y_3, y_4).
\end{eqnarray*}
Note that $T^k$ acts isometrically on $L^p(\mathbb{R}^4),$ and
\begin{eqnarray*}
T^{-k} \mathcal{A}_t^k T^kf(y)=2^{-k}\int_{\mathbb{R}^3}f(y_1-tx_1,y_2-tx_2,y_3-tx_3, y_4-t(1+ 2^{-k\nu_1}\phi_k(x))) \eta_k(x)\chi_1(x_1)dx.
\end{eqnarray*}
If $U$ is sufficiently small, we have
$\partial_{x_1}^{2} \phi_k(x) \neq 0$
for all $x \in D \times U_1$, where $U_1$ is the image of $U$ under the projection $\mathbb{R}^3\mapsto \mathbb{R}^2$ given by $(x_1, x_2, x_3)\to (x_2, x_3)$.
 If $k_0$ is sufficiently large,  then \cite[Theorem 7.1]{I.U} (which is an extension of \cite{Sogge} and \cite[Proposition 4.5]{I.K.M}) implies that, for $p>2,$
\begin{eqnarray*}
\big|\big|\sup_{t>0}|T^{-k} \mathcal{A}_t^k T^kf|\big|\big|_{L^p}\leq C 2^{k\left(\frac{\nu_1}{ p}-1\right)}||f||_{L^p},\,\,\,\,\forall\,k\geq k_0,
\end{eqnarray*}
where $C>0$ is independent of $k$. Since $T^k$ acts isometrically on $L^p(\mathbb{R}^4),$ this is equivalent to
\begin{eqnarray*}
\big|\big|\sup_{t>0}| \mathcal{A}_t^k f|\big|\big|_{L^p}\leq C 2^{k\left(\frac{\nu_1}{ p}-1\right)}||f||_{L^p}.
\end{eqnarray*}
If $p>\nu_1\geq 2,$   we can sum over all $k\geq k_0$ and obtain the desired estimate for $\mathcal{M}.$

\textbf{Case 2:}  Let $\nu_2= \min\{\nu_1,\nu_2,\nu_3\}.$ If $\nu_1=\nu_2, $ then Case 1 provides the proof. Therefore, we assume $\nu_1>\nu_2.$

 Note that, if $\nu_2=1$ and $U$ is sufficiently small,  then the rank of $D^2 \phi(x)$ is two for all $x\in U.$ Hence,   the hypersurface has two non-vanishing principal curvatures. Consequently, due to the classical result by Greenleaf in \cite{Gr} the maximal operator is bounded for $p>\frac{3}{2}$.

We now consider the case  $\nu_2\ge2$.
The proof of Theorem \ref{th:adap} shows that the expression of $\phi$ in an adapted coordinate system is
\begin{align*}
\phi(Bx)=&x_1^{\nu_2}x_2+x_1^{\nu_1}(\tilde Q_1(x_1)-c_1)-\frac{c_1}{c_2}x_1^{\nu_2+1}(\tilde Q_2(x_1)-c_2) \\& +\frac{x_1^{\nu_2}x_2}{c_2}(\tilde Q_2(x_1)-c_2)+\left(x_1^{\nu_3}(\tilde Q_3(x_1)-c_3)-\frac{c_3x_1^{\nu_2} }{c_2}(\tilde Q_2(x_1)-c_2)\right)x_3,
\end{align*}
where $B\in \mathbb{R}^{3\times 3}$ is an invertible matrix.  The polynomials $\tilde{Q}_i$ ($i\in \{1,2,3\}$) and the constants $c_i$ are defined in the proof of Theorem \ref{th:adap}.
Since the linear change of variables $Bx$ does not affect the $L^p$-boundedness of $\mathcal{M}$, we can assume
\begin{align*}
\phi(x)=&x_1^{\nu_2}x_2+x_1^{\nu_1}(\tilde Q_1(x_1)-c_1)-\frac{c_1}{c_2}x_1^{\nu_2+1}(\tilde Q_2(x_1)-c_2) \\& +\frac{x_1^{\nu_2}x_2}{c_2}(\tilde Q_2(x_1)-c_2)+\left(x_1^{\nu_3}(\tilde Q_3(x_1)-c_3)-\frac{c_3x_1^{\nu_2} }{c_2}(\tilde Q_2(x_1)-c_2)\right)x_3.
\end{align*}
Note that every monomial  of
\begin{align*}
\phi_r(x)\colonequals & x_1^{\nu_1}(\tilde Q_1(x_1)-c_1)-\frac{c_1}{c_2}x_1^{\nu_2+1}(\tilde Q_2(x_1)-c_2) \\& +\frac{x_1^{\nu_2}x_2}{c_2}(\tilde Q_2(x_1)-c_2)+\left(x_1^{\nu_3}(\tilde Q_3(x_1)-c_3)-\frac{c_3x_1^{\nu_2} }{c_2}(\tilde Q_2(x_1)-c_2)\right)x_3
\end{align*}
has a degree at least $\nu_2+2.$ The distance is   $d_x=h(\phi)=\nu_2\ge 2.$
One can check that the hyperplane $\{(t_1, t_2, t_3)\in \mathbb{R}^3_{+}:\,\frac{t_1}{\nu_2+1/2}+\frac{t_2}{2\nu_2+1}=1\}$  touches  the Newton polyhedron of $\phi$ only at $(\nu_2,1,0).$ 
Similar to Case 1, we choose a smooth non-negative function $\chi_1$ which is
 supported in the annulus
\begin{eqnarray*}
D\colonequals \big\{(x_1,x_2)\in \mathbb{R}^2:\,\,\frac{1}{4}\leq x_1^2+x_2^2\leq 4\big\}
\end{eqnarray*}
and satisfies
\begin{eqnarray*}
\sum_{k=k_0}^{\infty} \chi_1\left(2^{\frac{k}{\nu_2+1/2}}x_1, 2^{\frac{k}{2\nu_2+1}}x_2\right)=1\,\,\,\,\text{for all}\,\,\,\, (0, 0)\neq (x_1, x_2)\quad \mbox{with} \quad (x_1, x_2, x_3) \in U.
\end{eqnarray*}
 Notice that by choosing $U$ small, we can choose $k_0\in \mathbb{N}$ as large as we need. We can then decompose $\mathcal{A}_t$  as
$$\mathcal{A}_tf(y)=\sum_{k=k_0}^{\infty} \mathcal{A}_t^k f(y), $$
where
\begin{align*}
 \mathcal{A}_t^k f(y)\colonequals \int_{\mathbb{R}^3}f(y_1-tx_1,y_2-tx_2,y_3-tx_3 &, y_4-t(1+ \phi(x_1,x_2,x_3))) \\
 &\times \eta(x)\chi_1\left(2^{\frac{k}{\nu_2+1/2}}x_1, 2^{\frac{k}{2\nu_2+1}}x_2\right)dx.
\end{align*}
We apply the  change of variables $(2^{\frac{k}{\nu_2+1/2}}x_1, 2^{\frac{k}{2\nu_2+1}}x_2, x_3)\to (x_1, x_2, x_3) ,$ then
\begin{align*}
 \mathcal{A}_t^k f(y)= 2^{-\frac{3k}{2\nu_2+1}}\int_{\mathbb{R}^3}f(y_1-2^{-\frac{k}{\nu_2+1/2}}tx_1, y_2-2^{-\frac{k}{2\nu_2+1}}tx_2, y_3- tx_3&, y_4-t(1+ 2^{-k}\phi_k(x)))\\
 &\times  \eta_k(x)\chi_1(x_1, x_2)dx,
 \end{align*}
where
\begin{eqnarray*}
\phi_k(x)\colonequals 2^k\phi(2^{-\frac{k}{\nu_2+1/2}}x_1, 2^{-\frac{k}{2\nu_2+1}}x_2,  x_3)
 \end{eqnarray*}
 and
\begin{eqnarray*}
 \eta_k(x)\colonequals \eta(2^{-\frac{k}{\nu_2+1/2}}x_1, 2^{-\frac{k}{2\nu_2+1}}x_2, x_3).
\end{eqnarray*}
Assume $1\le p<\infty.$ We define the scaling operator
\begin{eqnarray*}
T^kf(y_1, y_2, y_3, y_4)\colonequals 2^{\frac{3k}{p(2\nu_2+1)}} f(2^{\frac{k}{\nu_2+1}}y_1, 2^{\frac{k}{\nu_2+1}}y_2, y_3, y_4).
\end{eqnarray*}
Note that $T^k$ acts isometrically on $L^p(\mathbb{R}^4),$ and
\begin{eqnarray*}
T^{-k} \mathcal{A}_t^k T^kf(y)=2^{-\frac{3k}{2\nu_2+1}}\int_{\mathbb{R}^3}f(y_1-tx_1,y_2-tx_2,y_3-tx_3, y_4-t(1+ 2^{-k}\phi_k(x))) \eta_k(x)\chi_1(x)dx.
\end{eqnarray*}
We have
$$\phi_k(x)=x_1^{\nu_2}x_2+2^k\phi_r (2^{-\frac{k}{\nu_2+1/2}}x_1, 2^{-\frac{k}{2\nu_2+1}}x_2, x_3).$$
One can check that $\phi_r(x) $ is divisible by $x_1^{\nu_2+1}$ and so we can write  $$\phi_r(x)=x_1^{\nu_2+1}\tilde{\phi}_r(x),$$
where $\tilde{\phi}_r(x)$ is a polynomial function. This shows
\begin{equation}\label{phrk}
2^k\phi_r (2^{-\frac{k}{\nu_2+1/2}}x_1, 2^{-\frac{k}{2\nu_2+1}}x_2, x_3)=2^{-\frac{k}{2\nu_2+1}}x_1^{\nu_2+1}\tilde{\phi}_r (2^{-\frac{k}{\nu_2+1/2}}x_1, 2^{-\frac{k}{2\nu_2+1}}x_2, x_3)\to 0
\end{equation}
as $k\to \infty.$
We mention that    $\chi_1(x_1, x_2)$ is supported  in the annulus $D$ and so $|x_1|$ and $|x_2|$ are bounded by 2.  $|x_3| $ is also bounded on the support of $\eta_k(x)$.   Using these facts  we get that  the limit \eqref{phrk} holds uniformly with respect to $(x_1,  x_2, x_3)$ on the support of $\eta_k(x)\chi_1(x_1, x_2)$.
 Let $m(\phi_k)$ denotes the maximal order of vanishing of $\phi_k$ in the support of $\eta_k(x)\chi_1(x_1, x_2)$. We now compute $m(\phi_k)$:  Let  $(x_1^0, x_2^0, x_3^0)$ be any point in the support of $\eta_k(x)\chi_1(x_1, x_2)$ such that $\phi_k(x_1^0, x_2^0, x_3^0)=0.$ If $x_1^0\neq 0,$ then we compute
 $$\partial^2_{x_1x_2}\phi_k(x_1^0,x_2^0,x_3^0)
 =\nu_2(x_1^0)^{\nu_2-1}\left(1+{c_2^{-1}}(\tilde{Q}_2(2^{-\frac{k}{\nu_2+1/2}}x_1^0)-c_2)+2^{-\frac{k}{\nu_2+1/2}}c_2^{-1}\tilde{Q}'_2(2^{-\frac{k}{\nu_2+1/2}}x_1^0)\right).$$
Note that the last two terms in the bracket converges uniformly to zero as $k\to \infty.$  Therefore, if $x_1^0\neq 0$ and $k_0$ is sufficiently large (i.e., $U$ is sufficiently small), then $ \partial^2_{x_1x_2}\phi_k(x_1^0,x_2^0,x_3^0)\neq 0.$ This means the order of vanishing of $\phi_k$ at $(x_1^0, x_2^0, x_3^0)$ is 1.
If $x_1^0=0,$ then $x_2^0\neq 0$ and so  $\partial_{x_1}^{\nu_2}\phi_k(0, x_2^0, x_3^0)=x_2^0\neq 0.$ We also have $\partial_{x_1}^{\alpha_1}\partial_{x_2}^{\alpha_2}\partial_{x_3}^{\alpha_3}\phi_k(0, x_2^0, x_3^0)=0$ for any multi-index $(\alpha_1,\alpha_2,\alpha_2)$ with $\alpha_1+\alpha_1+\alpha_3<\nu_2.$ This means the order of vanishing of $\phi_k$ at $(0, x_2^0,x_3^0)$ is $\nu_2$ and so  $m(\phi_k)=\nu_2.$
If $k_0$ is sufficiently large,  then \cite[Theorem 7.1]{I.U} (see \cite[Proposition 4.5]{I.K.M} for a similar result) implies that, for $p>\nu_2,$
\begin{eqnarray*}
\big|\big|\sup_{t>0}|T^{-k} \mathcal{A}_t^k T^kf|\big|\big|_{L^p}\leq C 2^{k\left(\frac{1}{ p}-\frac{3}{2\nu_2+1}\right)}||f||_{L^p},
\end{eqnarray*}
 where $C>0$ is independent of $k$. Since $T^k$ acts isometrically on $L^p(\mathbb{R}^4),$ this is equivalent to
\begin{eqnarray*}
\big|\big|\sup_{t>0}| \mathcal{A}_t^k f|\big|\big|_{L^p}\leq C 2^{k\left(\frac{1}{ p}-\frac{3k}{2\nu_2+1}\right)}||f||_{L^p}.
\end{eqnarray*}
Since $\frac{1}{ p}-\frac{3}{2\nu_2+1}$ is strictly less than zero, we can sum over all $k\geq k_0$ and obtain the desired estimate for $\mathcal{M}.$

 \end{proof}

\bigskip

 \begin{proof}[\textbf{Proof of Proposition \ref{prop:max}}]
 The first part of the proposition follows from Theorem \ref{th:osc} and \cite[Theorem 1.12]{I.K.M}.

We now prove the second part of the proposition. By Theorem \ref{Bon} there exists an invertible matrix $A\in \mathbb{R}^{3\times 3}$ such that
$ \phi(Ax)$ either depends on  at most two variables $x_1$ and $x_2$ or has the form in \eqref{form}.
Hence, we divide the proof into two parts.

\bigskip
\textbf{Part 1:} Assume that $\phi(Ax) $ depends on at most two variables $x_1$ and $x_2$. Then we have
 \begin{eqnarray*}
 \int_U |\phi(x)|^{-\frac{1}{p}}dx=\det(A) \int_{\tilde{U}} |\phi(Ax)|^{-\frac{1}{p}}dx=\det(A) \int_{\tilde{U}} |\tilde{\phi}(x_1,x_2)|^{-\frac{1}{p}}dx_1dx_2dx_3,
 \end{eqnarray*}
 where $\tilde{U}\colonequals \{Ax:\, x\in U\}$ and $\tilde{\phi}(x_1,x_2)\colonequals \phi(Ax).$ Since $\tilde{\phi}$ does not depend on $x_3$, we can find a constant $C>0$ and a neighborhood $\Omega\subset \mathbb{R}^2$ of the origin such that
 \begin{eqnarray*}
 \int_{\tilde{U}} |\tilde{\phi}(x_1,x_2)|^{-\frac{1}{p}}dx_1dx_2dx_3\geq C\int_{\Omega}  |\tilde{\phi}(x_1,x_2)|^{-\frac{1}{p}}dx_1dx_2.
 \end{eqnarray*}
 Then the proof follows from \cite[Proposition 1.7]{I.K.M}.

 \bigskip
\textbf{Part 2:} Let
$ \phi(Ax)$ have the form in \eqref{form}
\begin{equation*}\label{secondcase}
\phi(Ax)=Q_1(x_1)+Q_2(x_1)x_2+Q_3(x_1)x_3,
\end{equation*}
where $Q_1, Q_2, Q_3:\mathbb{R}\to \mathbb{R}$ are polynomial functions.
As before  let $\nu_1,\nu_2,\nu_3$   denote the multiplicities of $x_1=0$ as the root  of $Q_1(x_1),$ $Q_2(x_1),$ and $Q_3(x_1).$   The conditions $\phi(0)=0$ and $\nabla \phi(0)=0$ imply $\nu_1\geq 2,\nu_2\geq 1,$ and $\nu_3\geq 1.$
If  $Q_2(x_1)$ or  $Q_3(x_1)$ is identically zero, then $\phi(Ax)$ depends at most two variables and so Part 1  implies the proof. Hence, we assume that $Q_2(x_1)$ and $Q_3(x_1)$ are not identically zero. $Q_1(x_1)$ can be identically zero. In this case we formally put $\nu_1=\infty.$
  Without loss of generality we can assume $\nu_2\leq \nu_3.$ Then  $\min\{\nu_1,\nu_2,\nu_3\}$  can be $\nu_1$ or $\nu_2.$
Hence, we consider two different cases:

\textbf{Case 1:} Let $\nu_1= \min\{\nu_1,\nu_2,\nu_3\}.$
 The proof of Theorem \ref{th:adap} shows that in this case the given coordinate system is adapted
and  $d_x=h(\phi)=\nu_1\geq 2.$ Then we can write $\phi(Ax) = x_1^{\nu_1} \tilde{Q}(x_1,x_2,x_3)$, where $\tilde{Q}$ is a polynomial function with $\tilde{Q}(0,0,0) \neq 0$.
We have
 \begin{align*}\int_U |\phi(x)|^{-\frac{1}{p}}dx&=\det(A) \int_{\tilde{U}} |\phi(Ax)|^{-\frac{1}{p}}dx
=\det(A) \int_{\tilde{U}} \frac{dx}{|x_1|^{\frac{\nu_1}{p}}|\tilde Q(x_1, x_2, x_3)|^{\frac{1}{p}}},
 \end{align*}
 where $\tilde{U}\colonequals \{Ax:\, x\in U\}$. If $U$ is sufficiently small, then  $\tilde Q$ is non-zero in $U$. Hence, there is a constant $C>0$ and an interval $(a,b)$ containing $0$ such that
 $$\int_U |\phi(x)|^{-\frac{1}{p}}dx\geq C \int_a^b \frac{1}{|x_1|^{\frac{\nu_1}{p}}}dx_1.$$ Obviously, the integral on the right hand side of this inequality  diverges if $p\leq \nu_1.$

\textbf{Case 2:}  Let $\nu_2= \min\{\nu_1,\nu_2,\nu_3\}.$ If $\nu_1=\nu_2,$ then the proof follows from Case 1. Hence, we assume $\nu_1>\nu_2.$
The proof of Theorem \ref{th:adap} shows that the expression $\phi$ in an adapted coordinate system is
  \begin{align*}
\phi(Bz)=&z_1^{\nu_2}z_2+z_1^{\nu_1}(\tilde Q_1(z_1)-c_1)-\frac{c_1}{c_2}z_1^{\nu_2+1}(\tilde Q_2(z_1)-c_2) \\& +\frac{z_1^{\nu_2}z_2}{c_2}(\tilde Q_2(z_1)-c_2)+\left(z_1^{\nu_3}(\tilde Q_3(z_1)-c_3)-\frac{c_3z_1^{\nu_2} }{c_2}(\tilde Q_2(z_1)-c_2)\right)z_3.
\end{align*}
Then this function can be written as $\phi(Bz)=z_1^{\nu_2}\tilde Q(z_1, z_2, z_3),$ where $\tilde Q$ is a polynomial function satisfying
$\tilde Q(0, 0, 0)=0$ and $\partial_{z_2} \tilde Q(0, 0, 0)\neq0$.
  Thus, we have
 \begin{align*}\int_U |\phi(x)|^{-\frac{1}{p}}dx&=\det(B) \int_{\tilde{U}} |\phi(Ax)|^{-\frac{1}{p}}dx
 =\det(B) \int_{\tilde{U}} \frac{1}{|x_1|^{\frac{\nu_2}{p}}|\tilde Q(x_1, x_2, x_3)|^{\frac{1}{p}}}dx
 \end{align*}
 where $\tilde{U}\colonequals \{Bx:\, x\in U\}$.
 There exist  an interval $(a,b)$ containing $0$ and an neighborhood $\Omega\subset \mathbb{R}^2$ of the origin such that
 $$\int_U |\phi(x)|^{-\frac{1}{p}}dx=\det(B)\int_{a}^b\frac{dx_1}{|x_1|^{\frac{\nu_2}{p}}}\int_{\Omega}  \frac{dx_2dx_3}{|\tilde Q(x_1, x_2, x_3)|^{\frac{1}{p}}}.$$
Obviously, the last integral diverges if $p\leq \nu_2$ and converges whenever $p>\nu_2$.

  \end{proof}

\bigskip

\begin{proof}[\textbf{Proof of Theorem \ref{th:h<2}}]
When the rank of $D^2\phi(0)$ is two, the claimed result follows from \cite[Theorem 2]{Gr}.

We now prove the case $D^2 \phi(0)=0.$
By Theorem \ref{Bon} there exists an invertible matrix $A\in \mathbb{R}^{3\times 3}$ such that
$ \phi(Ax)$ either depends on at most two variables $x_1$ and $x_2$ or has the form in \eqref{form}.
 In the later case, one can check that, if $D^2 \phi(0)=0$, then the height $h(\phi)$ can not be strictly less than 2. Hence, it remains to prove the case where $ \phi(Ax)$  depends on at most two variables $x_1$ and $x_2.$


 Assume $ \phi(Ax)$ depends on at most two variables $x_1$ and $x_2$. By following the proof Theorem \ref{th:Max}, we can present the averaging operator as
$$\mathcal{A}_t f(y)=\int_a^b T_{\theta}R^{\theta}{A}_t^{\theta}R^{-\theta}T_{\theta}^{-1} f(y)d\theta. $$
We remind the operator ${A}_t^{\theta}$
$$ {A}_t^{\theta} f(y)=
\int_{\mathbb{R}^2}f(y_1-tx_1,y_2-tx_2,y_3, y_4-t(1+ \phi(x_1,x_2))) \tilde{\eta} (x_1,x_2, \theta)dx_1dx_2.$$
 By considering $y_3$ and $\theta$ as  parameters, we can apply \cite[Theorem 1.2]{P1} if $U$ is sufficiently small, and obtain that, for $p>h(\phi),$
\begin{equation*}\int_{\mathbb{R}^3}\big(\sup_{t>0}|A^{\theta}_tf(y)| \big)^pdy_1dy_2dy_4\leq C_p \int_{\mathbb{R}^3}|f(y_1,y_2,y_3,y_4)|^pdy_1dy_2dy_4,
\end{equation*}
where $C_p$ is independent of  $\theta$ and $y_3.$
 Integrating this equation with respect to  $y_3,$ we obtain that the operator $\sup_{t>0}|{A}_t^{\theta}f|$ is bounded on $L^p(\mathbb{R}^4)$ for $p> h(\phi).$ Using this fact and repeating the same arguments in the proof of Theorem \ref{th:Max}, we obtain that $\mathcal{M}$ is bounded on $L^p(\mathbb{R}^4)$ for $p>h(\phi).$ The sharpness of the exponent $p>h(\phi)$ follows from Corollary \ref{cor:max}.

\end{proof}

\section{Oscillation and contact indexes}


In analogy with
Arnold's notion of the \emph{singularity index} \cite{Ar, agvMN82}, one can define (see \cite{I.K.M}) the \emph{uniform oscillation
index} $\beta_u(x^0, S)$ of the hypersurface $S\subset \mathbb{R}^{n+1}$ at the point $x^0\in S$ as follows: Let $\mathcal{B}_u(x^0, S)$ denote the set of all $\beta\ge0$ for which there exists an open neighborhood
$U_\beta$ of $x^0$ in $S$ such that estimate
\begin{equation}\label{(1.9)}
|\widehat{\rho d\sigma}(\xi)|\le \frac{C_{\rho, \beta}}{(1+|\xi|)^\beta}
\end{equation}
holds true for every function $\rho\in C_0^\infty (U_\beta)$. Then the uniform oscillation
index $\beta_u(x^0, S)$ is defined by
\begin{eqnarray*}
\beta_u(x^0, S)\colonequals \sup\{\beta: \beta\in  \mathcal{B}_u(x^0, S)\}.
\end{eqnarray*}
If we restrict our attention to the normal direction to $S$ at $x^0$ only, then we can define
analogously the notion of \emph{oscillation index} of the hypersurface $S$ at the point $x^0\in S$.
More precisely, if $n(x^0)$ is a unit normal to $S$ at $x^0$, then we let $\mathcal{B}(x^0, S)$ denote the set of all $\beta\ge0$ for which there exists an open neighborhood
$U_\beta$ of $x^0$ in $S$ such that estimate \eqref{(1.9)} holds true along the line $\mathbb{R}n(x^0)$ for every function $\rho\in C_0^\infty(U_\rho)$, i.e.,
\begin{eqnarray*}
|\widehat{\rho d\sigma}(\lambda n(x^0))|\le \frac{C_{\rho, \beta}}{(1+|\lambda|)^\beta}.
\end{eqnarray*}
Then the oscillation index $\beta(x^0, S)$ is defined by
\begin{eqnarray*}
\beta(x^0, S)\colonequals \sup\{\beta: \beta\in  \mathcal{B}(x^0, S)\}.
\end{eqnarray*}

We also define the \emph{uniform contact index}
$\gamma_u(x^0, S)$ of the hypersurface $S$ at the
point $x^0\in S$ as follows: Let $\mathcal{C}_u(x^0, S)$ denote the set of all
$\gamma>$0 for which there exists an
open neighborhood $U_\gamma$
 of $x^0$ in $S$ such that the estimate
\begin{equation*}
\int_{U_\gamma}d_H(x)^{-\gamma} d\sigma(x)<\infty
\end{equation*}
holds true for every affine hyperplane $H$ in $\mathbb{R}^n$. Then we put
\begin{eqnarray*}
\gamma_u(x^0, S):=\sup\{\gamma: \gamma\in  \mathcal{C}_u(x^0, S) \}.
\end{eqnarray*}

Similarly, we let $\mathcal{C}(x^0, S)$ denote the set of all
$\gamma>0$ for which there exists an open neighborhood
$U_\gamma$ of $x^0$ in $S$ such that
\begin{equation*}
\int_{U_\gamma}d_{T_{x^0}}(x)^{-\gamma} d\sigma(x)<\infty,
\end{equation*}
where $T_{x^0}$ affine tangent hyperplane at the point $x^0$
and call
\begin{eqnarray*}
\gamma(x^0, S) :=\sup\{
\gamma: \gamma\in  \mathcal{C}(x^0, S)\}.
\end{eqnarray*}
the contact index
$\gamma(x^0, S)$ of the hypersurface $S$ at the point $x^0\in S$. Then clearly
\begin{eqnarray*}
\beta_u(x^0, S)\le \beta(x^0, S)\quad \mbox{ and}\quad
\gamma_u(x^0, S)\le \gamma(x^0, S).
\end{eqnarray*}

The main finding of this paper related to estimates for the Fourier transform of surface-carried measures, can be summarized as follows:

\begin{theorem}\label{Main}
Let $S$ be a  hypersurface in $\mathbb{R}^4$ and  $x^0\in S.$ After applying a suitable Euclidean motion of $\mathbb{R}^4,$ let us assume that $x^0:=(0, 0, 0, 0)\in S$ and in a neighborhood of  $x^0$ we may view $S$ as the graph of a polynomial function $\phi$ satisfying  $\phi(0)=0, \nabla\phi(0)=0$. Assume the   Gaussian curvature of $S$ is identically zero in this neighborhood of $x^0\in S$. Let $h(x^0, S)\colonequals h(\phi)$ be the height of $\phi$ defined in \eqref{height}. 
Then the following relations
\begin{equation}\label{maineq}
\beta_u(x^0, S)=\beta(x^0, S)=\gamma_u(x^0, S)=\gamma(x^0, S)=\frac{1}{h(x^0, S)}.
\end{equation}
hold.
\end{theorem}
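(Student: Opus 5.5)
The plan is to prove the chain of inequalities
\[
\frac{1}{h}\le \beta_u(x^0,S)\le \beta(x^0,S)\le \frac1h ,
\qquad
\frac{1}{h}\le \gamma_u(x^0,S)\le \gamma(x^0,S)\le \frac1h ,
\]
where $h=h(x^0,S)=h(\phi)$. The middle inequalities in each chain are the trivial ones already noted before the statement. Since $\phi(0)=0$ and $\nabla\phi(0)=0$, the tangent hyperplane $T_{x^0}$ is $\{x_4=0\}$ and the normal is $n(x^0)=e_4$. Hence $\widehat{\rho d\sigma}(\lambda n(x^0))$ is exactly the integral \eqref{Osc} with $\eta$ as in \eqref{Oscil}, and $d_{T_{x^0}}(x)=|\phi(x')|$ on $S$.

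For the oscillation indexes, the lower bound $\beta_u\ge 1/h$ follows from estimate \eqref{main} of Theorem \ref{th:osc}. For any $\beta<1/h$ we have $(\log(2+|\xi|))^{\nu}(1+|\xi|)^{-1/h}\le C_\beta(1+|\xi|)^{-\beta}$, so every $\beta<1/h$ lies in $\mathcal{B}_u(x^0,S)$, taking $U_\beta$ to be the image of the neighborhood $U$ from Theorem \ref{th:osc}. For the upper bound $\beta\le 1/h$, I would use the sharpness statement \eqref{sharp}. Choosing $\rho$ with $\rho(x^0)>0$ gives $\eta(0)\ne0$, and then $|\widehat{\rho d\sigma}(\lambda e_4)|\sim c\,\lambda^{-1/h}(\log\lambda)^{\nu}$ with $c\neq0$. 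This is incompatible with decay of order $\lambda^{-\beta}$ for any $\beta>1/h$, and every neighborhood $U_\beta$ admits such a $\rho$.

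For the contact indexes, the upper bound $\gamma\le 1/h$ comes from Proposition \ref{prop:max}(ii) applied with $\gamma=1/p$. For $\gamma\ge 1/h$ we have $p=1/\gamma\le h$, so $\int_U|\phi|^{-\gamma}dx=\infty$ on every small neighborhood. Since $d\sigma$ is comparable to $dx$ on the graph, $\gamma\notin\mathcal{C}(x^0,S)$. For the lower bound $\gamma_u\ge 1/h$, we must control $\int d_H^{-\gamma}d\sigma$ uniformly over all affine hyperplanes $H$ when $\gamma<1/h$. I would split into two cases. If $H$ is far from $x^0$, or its normal is far from $e_4$, then $d_H$ is bounded below, or $S$ crosses $H$ transversally near $x^0$. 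In both situations the integral is finite for any $\gamma<1$, and $1/h\le 1$ since $h\ge1$. If $H$ is nearly horizontal and passes near $x^0$, then $d_H(x)\sim|\phi(x')-\xi\cdot x'-c|$ with small $(\xi,c)$. Here I would invoke the implication in \cite[Theorem 1.12]{I.K.M} (as in the proof of Proposition \ref{prop:max}(i)): the uniform Fourier decay \eqref{main}, valid for all $\xi$ and in particular for all linear perturbations of $\phi$, yields $\int_U|\phi(x)-\xi\cdot x-c|^{-\gamma}dx<\infty$ for every $\gamma<1/h$. Alternatively, one can argue directly via the sublevel-set estimate $|\{x\in U:|\phi(x)-\xi\cdot x-c|<\varepsilon\}|\le C\varepsilon^{1/h}|\log\varepsilon|^{\nu}$, which follows from the oscillatory bound by a standard Fourier argument.

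I expect the main obstacle to be this last step, namely the uniformity over hyperplanes with small tilt. One has to check that the constants in \eqref{main} are independent of $(\xi_1,\xi_2,\xi_3)$ and that the cited result of \cite{I.K.M}, stated in $\mathbb{R}^3$, transfers verbatim to $\mathbb{R}^4$. Its proof only uses the relation between Fourier decay and the distribution function of $\phi-\ell$, so I would redo it briefly in dimension three to make the transfer explicit.
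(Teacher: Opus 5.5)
Your proposal is correct and is essentially the argument the paper intends. The paper's proof simply defers to the reasoning of \cite[Theorem 1.14]{I.K.M} combined with Theorem~\ref{th:osc} and Proposition~\ref{prop:max}, which is exactly your pair of chains $1/h\le\beta_u\le\beta\le 1/h$ and $1/h\le\gamma_u\le\gamma\le 1/h$, with the Fourier-decay-to-sublevel-set argument (\cite[Theorem 1.12]{I.K.M}) supplying $\gamma_u\ge 1/h$.

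The paper also lists Theorem~\ref{th:Max}, but your route shows it is not needed. The obstacle you flag is also not a real one: \eqref{main} already holds with a single constant for all $\xi\in\mathbb{R}^4$, and the sublevel-set argument does not depend on the dimension. It therefore handles every hyperplane $H$ at once, so your case split into far, transversal and nearly horizontal $H$ is superfluous.
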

\begin{proof}
The proof  follows by using  Theorem \ref{th:osc}, Theorem \ref{th:Max}, and  Proposition \ref{prop:max} and by repeating the same arguments in the proof of \cite[Theorem 1.14]{I.K.M}.
\end{proof}
\begin{Remark}
 Theorem \ref{Main} is analogous to Theorem 1.14 in \cite{I.K.M}. It should be noted that the examples of Varchenko in  \cite{Var}  show that, in general,  the height $h(x^0, S)$ does not necessarily determine $\beta(x^0, S)$. Also, in some cases the strict inequality $\beta_u(x^0, S) < \beta(x^0, S)$ holds, for more simple examples, see  \cite{Gressman}. However, these examples do not refute the conjecture proposed by Stein, Iosevich, and Sawyer.

\end{Remark}

\section{Extensions}
Let $\phi:\mathbb{R}^3\to \mathbb{R}$  be a smooth function of finite type at the origin (see Section 2) with $\phi(0)=0$ and $\nabla \phi(0)=0$.  Assume that  there is an invertible matrix $A\in \mathbb{R}^{3\times 3}$ such that $\phi(Ax)$ either depends on at most two variables $x_1$ and $x_2$ or has the form
$$\phi(Ax)=\phi_1(x_1)+\phi_2(x_1)x_2+\phi_3(x_1)x_3,$$
where $\phi_i:\mathbb{R}\to \mathbb{R},\, i\in \{1,2,3\}$ are smooth functions. For such functions, one can check that
$$\det(D^2\phi(x))=0,\,\,\,\,\forall\,x\in \mathbb{R}^3.$$
Theorem \ref{Bon} shows that all polynomials with vanishing Hessian determinant have  this property.

All of our results can be extended for all smooth  functions  $\phi$ satisfying the above condition.
  We comment on how to construct an adapted coordinate system: Let $\phi(Ax)$ depends on at most two variables, then, as in Part 1 of the proof of Theorem \ref{th:adap},  the existence of an adapted coordinate system follows from \cite{Adap}.
Let $\phi(Ax)=\phi_1(x_1)+\phi_2(x_1)x_2+\phi_3(x_1)x_3,$ then the conditions $\phi(0)=0$ and $\nabla \phi(0)=0$ imply $\phi_1(0)=\partial_{x_1}\phi_1(0)=0$ and $\phi_2(0)=\phi_3(0)=0.$
 Since we assume $\phi$ is finite type, at least one of $\phi_1,$ $\phi_2$, and $\phi_3$ is not flat at the origin. Hence, there exists a finite natural number $m\geq 1$ such that $ \partial_{x_1}^{k}\phi_1(0)=0$ for all $k\in \{0,...,m\}$, $\partial_{x_1}^{j}\phi_2(0)= \partial_{x_1}^{j}\phi_3(0)=0$ for all $j\in \{0,...,m-1\}$,   and at least one of $\partial^{m+1}_{x_1}\phi_1(0),\,\partial^{m}_{x_1}\phi_2(0),$ and $\partial^{m}_{x_1}\phi_3(0)$ is non-zero.
Then we can write \begin{align*}
\phi(Ax)= x_1^{m}\left(\frac{\partial^{m+1}_{x_1}\phi_1(0)}{(m+1)!}x_1+\frac{\partial^{m}_{x_1}\phi_2(0)}{m !}x_2+\frac{\partial^{m}_{x_1}\phi_3(0)}{m!}x_3\right)
+\tilde{\phi}_1(x_1)+\tilde{\phi}_2(x_1)x_2+\tilde{\phi}_3(x_1)x_3,
\end{align*}
where $\tilde{\phi}_1(x_1)\colonequals {\phi}_1(x_1)-\frac{\partial^{m+1}_{x_1}\phi_1(0)}{(m+1)!}x_1^{m+1}$, $\tilde{\phi}_2(x_1)\colonequals {\phi}_2(x_1)-\frac{\partial^{m}_{x_1}\phi_2(0)}{m !}x_1^{m},$ and $\tilde{\phi}_3(x_1)\colonequals {\phi}_3(x_1)-\frac{\partial^{m}_{x_1}\phi_3(0)}{m!}x_1^{m}.$
If $\partial^{m+1}_{x_1} \phi_1(0) \neq 0$ and $\partial^{m}_{x_1}\phi_2(0)=\partial^{m}_{x_1} \phi_3(0)=0,$ then
$$\phi(Ax)= \frac{\partial^{m+1}_{x_1}\phi_1(0)}{(m+1)!}x_1^{m+1}
+\tilde{\phi}_1(x_1)+\tilde{\phi}_2(x_1)x_2+\tilde{\phi}_3(x_1)x_3$$
 and  the given coordinate system $x$ is adapted. The distance is  $d_x=m+1$.   If at least one of $\partial^{m}_{x_1}\phi_2(0)$ and $\partial^{m}_{x_1}\phi_3(0)$ is non-zero, then an adapted coordinate system can be obtained by
\begin{align*}
z_1 &=x_1,\\
z_2 &= \frac{\partial^{m+1}_{x_1}\phi_1(0)}{(m+1)!}x_1+\frac{\partial^{m}_{x_1}\phi_2(0)}{m !}x_2+\frac{\partial^{m}_{x_1}\phi_3(0)}{m!}x_3,\\
z_3&=x_3.
\end{align*}
In this new coordinate system $z$, $\phi$ has the form
$$z_1^{m}z_2+\psi_1(z_1)+\psi_2(z_1)z_2+\psi_3(z_1)z_3$$
for some smooth function $\psi_1(z_1),$ $\psi_2(z_1),$ and $\psi_3(z_1),$ which satisfy    $ \partial_{x_1}^{k}\psi_1(0)=0$ for all $k\in \{0,...,m+1\}$ and $\partial_{x_1}^{j}\psi_2(0)= \partial_{x_1}^{j}\psi_3(0)=0$ for all $j\in \{0,...,m\}$ .
 The distance in the coordinate system $z$ is $d_z=m.$ The adaptness of these coordinate systems can be proven by considering any smooth local coordinate system $y$ as in \eqref{new.coor} and using the arguments in Part 2 of the proof of Theorem \ref{th:adap}.   

 Also, our results concerning oscillatory integrals in Theorem \ref{th:osc}  and maximal operators in Theorem \ref{th:Max} can be obtained for such functions by using the  arguments in the proofs of these theorems.

 \bigskip

Getting rid of the above
assumption on $\phi$ would of course be interesting, but since our results are, to the best of our knowledge, the first sharp results in three dimensions, we believe that they have their own interest.

 As an extension of the present work, one could study smooth functions whose Hessian determinant is identically zero. Yet, this extension is not within reach so far and will be a matter of further study.

 In three dimensions and higher, sharp estimates for asymptotic behavior of oscillatory integrals and  maximal operators on $L^p$ remain unknown for general hypersurfaces. This paper contributes to the  three-dimensional case. We expect that the techniques used in this paper can
be useful for future studies.

\bigskip

\textbf{Acknowledgement.}
 G.\,Toshpulatov is funded  by the Deutsche Forschungsgemeinschaft (DFG, German Research Foundation) under Germany's Excellence Strategy EXC 2044/2--390685587, Mathematics M\"unster: Dynamics--Geometry--Structure.\\

\textbf{Data Availability.} There is no data associated with the paper.\\

\textbf{Conflict of interest.} The author has no conflict of interest to declare.

{}
\end{document}